\numberwithin{equation}{section}
\theoremstyle{plain}
\newtheorem{theorem}{Theorem}[section]
\newtheorem{remark}[theorem]{Remark}
\theoremstyle{definition}
\begin{document}
\title[Criteria for Starlikeness Using Schwarzian Derivatives]{Criteria for Starlikeness Using Schwarzian Derivatives }

\author[A. Sebastian]{Asha Sebastian}
\address{Department of Mathematics \\National Institute of Technology\\Tiruchirappalli-620015,  India }
\email{ashanitt18@gmail.com}

\author{V. Ravichandran}
\address{Department of Mathematics \\National Institute of Technology\\Tiruchirappalli-620015,  India }
\email{vravi68@gmail.com; ravic@nitt.edu}

\begin{abstract}
For a normalised analytic function  $f$ defined on the open unit disk in the complex plane, we determine several sufficient conditions for starlikeness  in terms of the quotients $Q_{ST}:=zf'(z)/f(z)$, $Q_{CV}:=1+zf''(z)/f'(z)$ and the Schwarzian derivative $Q_{SD}:=z^2\big(\left(f''(z)/f'(z)\right)'-\left(f''(z)/f'(z)\right)^2/2\big)$. These conditions were obtained by using the admissibility criteria of starlikeness in the theory of  second order differential subordination.
\end{abstract}

\subjclass[2010]{30C80, 30C45}
\keywords{Univalent functions;    convex functions;  starlike functions; subordination; Schwarzian derivative}
\dedicatory{Dedicated to Professor Milutin  Obradovi\'c }

\thanks{The first author is supported by  an  institute  fellowship from NIT Tiruchirappalli.}

\maketitle
\section{Introduction}A function $f:\mathbb{D}:=\{z\in\mathbb{C}:|z|<1\}\to\mathbb{C}$ is starlike if $t f(z)\in  f(\mathbb{D})$ for all $z\in \mathbb{D}$ and  $t\in [0,1]$.
We shall restrict our functions to belong to the  class  $\mathcal{A}$  of all analytic functions $f:\mathbb{D}\to \mathbb{C}$ normalized by  the condition $f(0) = f'(0)-1 = 0$. Let $\mathcal{S}\subset \mathcal{A}$ consists of univalent functions and $\mathcal{S}^* \subset \mathcal{A}$ be the class of starlike functions. A function $f\in\mathcal{A}$ is convex if $f(\mathbb{D})$ is convex and the class of all convex functions is denoted by $\mathcal{K}$. Analytically,  starlike and convex functions are characterized by
$\operatorname{Re} Q_{ST}>0$ and   $\operatorname{Re} Q_{CV}>0$ where $Q_{ST}:=zf'(z)/f(z)$ and $Q_{CV}:=1+zf''(z)/f'(z)$. For $0\leq \gamma <1$, the class $\mathcal{S}^*(\gamma)$  of  starlike functions of order $\gamma$ is defined by $\mathcal{S}^*(\gamma):=\{f\in \mathcal{A}: \operatorname{Re} Q_{ST}> \gamma\}$ and the class  $\mathcal{K}(\gamma)$  of convex functions of order $\gamma$ is defined by $\mathcal{K}:=\{f\in \mathcal{A}:  \operatorname{Re} Q_{CV}>\gamma\}$. The functions
in the classes $\mathcal{S}^*$ and $\mathcal{K}$ are univalent. A well-known univalence criteria of Nehari involves the Schwarzian derivative of function $f \in \mathcal{A}$   defined by $\{f,z\}:=\big(\left(f''(z)/f'(z)\right)'-\left(f''(z)/f'(z)\right)^2/2\big)$ and $Q_{SD}:=z^2\{f,z\}$ . Nehari \cite{MR0029999,MR0064145} studied necessary and sufficient conditions relating Schwarzian derivatives to univalency of functions $f\in \mathcal{A}$.   Schwarzian derivatives were studied by several authors (see \cite{ MR2557114,MR3608483}). Sharma et al.\ \cite{MR4308874} discussed sufficient conditions for strong starlike functions and Cho et al.  \cite{cho} studied higher order Schwarzian derivatives for Janowski classes.

Obradovi\'{c} \cite{MR1611761} has shown that the condition $|f''(z)|<1$ implies starlikeness of $f\in\mathcal{A}$ and the condition $|f''(z)|<1/2$ implies convexity. These simple conditions were further studied, among others, by  Tuneski \cite{MR1752408, MR1826485, MR2514893} and Kown and Sim \cite{MR4017391}. Our interest is to provide such simple sufficient conditions for starlikeness using $Q_{ST}$, $Q_{CV}$ and $Q_{SD}$. Our main tool in getting these result is the general theory of differential subordination introduced by Miller and Mocanu \cite{MR0783572}. 
Miller and Mocanu \cite[pp.244]{MR1760285} discussed on admissibility conditions related to the starlikeness and convexity; they proved that if $f \in \mathcal{A}$, with $f(z)f'(z)/z \neq 0$, $\operatorname{Re}\psi\left(Q_{ST}, Q_{CV}, Q_{SD}\right)>0$ then the function $f$ is starlike, provided the  function $\psi: \mathbb{C}^3 \rightarrow \mathbb{C}$ satisfy $\operatorname{Re}\psi(i\rho,i\tau,\xi+i\eta)\leqslant 0$, whenever $\rho, \tau, \xi, \eta\in \mathbb{R},\ \rho\tau\geqslant(1+3\rho^2)/2$ and $\rho\eta\geqslant0$. Several authors (see, for example, \cite{MR1078406, MR0399438, MR1894627, MR1408788, MR1340763, MR2179557, MR2162214}) applied this theory to investigate  criteria for the functions to be starlike or convex.    Ravichandran et al.\ \cite{MR1966516} proved that if $f \in \mathcal{A}$ satisfies $\operatorname{Re}Q_{ST}Q_{CV}>(\gamma+1)(\gamma-1/2)$, then the function $f$ is starlike of order $\gamma$. Motivated essentially by these works, we have a systematic discussion on various criteria involving $Q_{ST}$, $Q_{CV}$ and $Q_{SD}$   for the starlikeness of functions in class $\mathcal{A}$.


For a given set  $\Omega \subset \mathbb{C}$,  the class $\Psi(\Omega)$ of admissible functions consists of functions $\psi: \mathbb{C}^3 \times \mathbb{D} \rightarrow \mathbb{C}$ satisfying  the admissibility condition
\begin{equation}\label{sdlemma1}
\psi(i\rho,i\tau,\xi+i\eta) \notin \Omega
\end{equation}
for $z \in \mathbb{D}$, and for all real $\rho,\tau,\xi, \eta$ with
\begin{equation}\label{sdlemmacond}
\rho \tau\geqslant \frac{1}{2}(1+3\rho^2),\ \rho\eta\geqslant 0.
\end{equation}
Our theorems are proved by making use of  the following extension of the criteria  of Miller and Mocanu for the starlikeness of functions $f \in \mathcal{A}$  given in terms of the Schwarzian derivatives:

\begin{theorem}\cite[p. 9]{MR2470182} \label{sdlemma}If $f \in \mathcal{A}$ with  $f(z)f'(z)/z \neq 0$ satisfies  \[\psi(Q_{ST}, Q_{CV}, Q_{SD}) \in \Omega\] for some $\psi\in\Psi(\Omega)$, then the function $f$ is starlike.
\end{theorem}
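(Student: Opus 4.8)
The plan is to reduce the assertion to the standard theory of second order differential subordination by introducing $p(z):=Q_{ST}=zf'(z)/f(z)$. Because $f(z)f'(z)/z\neq 0$, the function $p$ is analytic and non-vanishing on $\mathbb{D}$ with $p(0)=1$. Logarithmic differentiation of $zf'(z)=p(z)f(z)$ gives $zf''(z)/f'(z)=zp'(z)/p(z)+p(z)-1$, so $Q_{CV}=p+zp'/p$; and writing $v:=f''/f'=p'/p+(p-1)/z$ together with $Q_{SD}=z^{2}\{f,z\}=z(zv)'-zv-\tfrac12(zv)^{2}$, a routine simplification yields
\[
Q_{SD}=\frac{z^{2}p''}{p}-\frac{3}{2}\left(\frac{zp'}{p}\right)^{2}+\frac{zp'}{p}-(p-1)-\frac{(p-1)^{2}}{2}.
\]
Hence $\psi(Q_{ST},Q_{CV},Q_{SD})$ is an explicit function of $p(z)$, $zp'(z)$ and $z^{2}p''(z)$ (and of $z$), and the hypothesis says that this quantity stays in $\Omega$ throughout $\mathbb{D}$. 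The aim is to deduce that $p$ is subordinate to $q(z):=(1+z)/(1-z)$, since $q(\mathbb{D})=\{w:\operatorname{Re}w>0\}$ would then give $\operatorname{Re}(zf'(z)/f(z))>0$, i.e.\ $f\in\mathcal{S}^{*}$.

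I would argue by contradiction using the boundary form of the Miller--Mocanu lemma. If $p\not\prec q$, there exist $z_{0}\in\mathbb{D}$, $\zeta_{0}\in\partial\mathbb{D}\setminus\{1\}$ and a real number $m\geq 1$ with $p(z_{0})=q(\zeta_{0})$, $z_{0}p'(z_{0})=m\,\zeta_{0}q'(\zeta_{0})$ and $\operatorname{Re}\bigl(1+z_{0}p''(z_{0})/p'(z_{0})\bigr)\geq m\operatorname{Re}\bigl(1+\zeta_{0}q''(\zeta_{0})/q'(\zeta_{0})\bigr)$. Write $q(\zeta_{0})=i\rho$ with $\rho\in\mathbb{R}$; then $\rho\neq 0$ because $p$ does not vanish, and $p(z_{0})=i\rho$. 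For $q(z)=(1+z)/(1-z)$ a short computation gives $\zeta_{0}=(i\rho-1)/(1+i\rho)$, $\zeta_{0}q'(\zeta_{0})=-\tfrac12(1+\rho^{2})$, and $\zeta_{0}q''(\zeta_{0})/q'(\zeta_{0})=i\rho-1$.

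It then remains to substitute these data into the identities of the first paragraph and check that the triple $(Q_{ST}(z_{0}),Q_{CV}(z_{0}),Q_{SD}(z_{0}))$ is of the form prohibited by \eqref{sdlemmacond}. From $p(z_{0})=i\rho$ we read off $Q_{ST}(z_{0})=i\rho$; from $z_{0}p'(z_{0})=m\zeta_{0}q'(\zeta_{0})=-\tfrac12 m(1+\rho^{2})$ we obtain $Q_{CV}(z_{0})=i\tau$ with $\tau=\rho+m(1+\rho^{2})/(2\rho)$, so that $\rho\tau=\rho^{2}+\tfrac{m}{2}(1+\rho^{2})\geq\tfrac12(1+3\rho^{2})$ since $m\geq 1$; and, putting $t:=z_{0}^{2}p''(z_{0})$, the displayed identity gives $Q_{SD}(z_{0})=\xi+i\eta$ with $\xi,\eta\in\mathbb{R}$ and $\eta=-\bigl(\operatorname{Re}t+z_{0}p'(z_{0})\bigr)/\rho$. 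Finally, $z_{0}p''(z_{0})/p'(z_{0})=t/(z_{0}p'(z_{0}))$, so the Miller--Mocanu inequality reads $\operatorname{Re}\bigl(1+t/(z_{0}p'(z_{0}))\bigr)\geq 0$; as $z_{0}p'(z_{0})=-\tfrac12 m(1+\rho^{2})<0$, this is equivalent to $\operatorname{Re}t\leq -z_{0}p'(z_{0})$, i.e.\ to $\rho\eta\geq 0$. Thus $(\rho,\tau,\xi,\eta)$ satisfies \eqref{sdlemmacond}, and since $\psi\in\Psi(\Omega)$, the admissibility condition \eqref{sdlemma1} yields $\psi(Q_{ST}(z_{0}),Q_{CV}(z_{0}),Q_{SD}(z_{0}))=\psi(i\rho,i\tau,\xi+i\eta)\notin\Omega$, contradicting the hypothesis. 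Therefore $p\prec q$, whence $\operatorname{Re}(zf'(z)/f(z))>0$ on $\mathbb{D}$ and $f$ is starlike.

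I expect the only genuine work to be computational: deriving the closed form of $Q_{SD}$ in terms of $p$, $zp'$ and $z^{2}p''$ (and then extracting $\operatorname{Im}Q_{SD}(z_{0})$ cleanly), and verifying that the Miller--Mocanu curvature inequality attached to the dominant $q(z)=(1+z)/(1-z)$ is exactly the sign condition $\rho\eta\geq 0$. Once these are in place, the result is the standard admissibility principle for second order differential subordinations applied to $p=Q_{ST}$ with dominant $q$.
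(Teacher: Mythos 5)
Your proof is correct and is essentially the argument of the cited source: the paper itself quotes Theorem \ref{sdlemma} from \cite{MR2470182} without proof, and that reference establishes it exactly as you do, by setting $p=Q_{ST}$, expressing $Q_{CV}$ and $Q_{SD}$ in terms of $p$, $zp'$, $z^2p''$, and applying the Miller--Mocanu boundary contact lemma with dominant $q(z)=(1+z)/(1-z)$ so that the contact data translate into the conditions $\rho\tau\geqslant(1+3\rho^2)/2$ and $\rho\eta\geqslant 0$ of \eqref{sdlemmacond}. Your identity for $Q_{SD}$ and the computations $\zeta_0q'(\zeta_0)=-\tfrac12(1+\rho^2)$, $\operatorname{Re}\bigl(1+\zeta_0q''(\zeta_0)/q'(\zeta_0)\bigr)=0$, and $\eta=-(\operatorname{Re}t+z_0p'(z_0))/\rho$ all check out.
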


\section{Criteria for starlikeness}
Lewandowski et al.\ \cite{MR0399438} discussed the criterion for starlikeness of a function $f \in \mathcal{A}$. Many authors have developed sufficient conditions for starlikeness and convexity of functions. See \cite{MR1078406, MR1894627, MR1408788, MR1340763, MR2179557, MR2162214}. In this section, we derive results relating the Schwarzian derivatives and starlikeness of functions in the class $\mathcal{A}$.
\begin{theorem}\label{sdtheorem1}
	Let $\alpha \geqslant0$ and $\beta \geqslant 0$. If the function $f \in \mathcal{A}$ satisfy any of the following inequalities
	\begin{enumerate}
		\item[(i)] $\operatorname{Re}\big(Q_{ST}(\alpha Q_{CV}+ \beta Q_{SD})\big)>-\alpha/2$,
		\item[(ii)] $\operatorname{Re}\big(Q_{CV}(\alpha Q_{ST}+ \beta Q_{SD})\big)>-\alpha/2$,
		\item[(iii)] $\operatorname{Re}\big(Q_{ST}(\alpha Q_{ST}+ \beta Q_{SD})\big)>0$,
		\item[(iv)] $\operatorname{Re}\big(Q_{CV}(\alpha Q_{CV}+ \beta Q_{SD})\big)>0$,
	\end{enumerate}
	then the function $f$ is starlike.
\end{theorem}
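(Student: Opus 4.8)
The plan is to obtain all four parts from Theorem~\ref{sdlemma} by choosing, in each case, a set $\Omega$ and an admissible function $\psi$ for which the displayed inequality is precisely the statement $\psi(Q_{ST},Q_{CV},Q_{SD})\in\Omega$. For (i) I would take $\Omega=\{w\in\mathbb{C}:\operatorname{Re}w>-\alpha/2\}$ and $\psi(u,v,w)=u(\alpha v+\beta w)$ (independent of the variable $z\in\mathbb{D}$); for (ii) the same $\Omega$ and $\psi(u,v,w)=v(\alpha u+\beta w)$; for (iii) and (iv), $\Omega=\{w\in\mathbb{C}:\operatorname{Re}w>0\}$ together with $\psi(u,v,w)=u(\alpha u+\beta w)$ and $\psi(u,v,w)=v(\alpha v+\beta w)$, respectively. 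Since each hypothesis presupposes that $Q_{ST},Q_{CV},Q_{SD}$ are analytic on $\mathbb{D}$, the function $f$ automatically satisfies $f(z)f'(z)/z\neq0$ there, so Theorem~\ref{sdlemma} applies once we verify $\psi\in\Psi(\Omega)$.

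The core of the proof is thus the admissibility check \eqref{sdlemma1}, i.e.\ showing $\psi(i\rho,i\tau,\xi+i\eta)\notin\Omega$ whenever $\rho,\tau,\xi,\eta$ are real with $\rho\tau\geqslant\tfrac12(1+3\rho^2)$ and $\rho\eta\geqslant0$. For (i), a direct expansion gives $\operatorname{Re}\psi(i\rho,i\tau,\xi+i\eta)=-\alpha\rho\tau-\beta\rho\eta$, and then $\rho\tau\geqslant\tfrac12(1+3\rho^2)$, $\rho\eta\geqslant0$ and $\alpha,\beta\geqslant0$ force
\[
\operatorname{Re}\psi(i\rho,i\tau,\xi+i\eta)\leqslant-\tfrac{\alpha}{2}\left(1+3\rho^2\right)\leqslant-\frac{\alpha}{2},
\]
so $\psi(i\rho,i\tau,\xi+i\eta)\notin\Omega$. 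Part (iii) is similar and even shorter, since there $\operatorname{Re}\psi(i\rho,i\tau,\xi+i\eta)=-\alpha\rho^2-\beta\rho\eta\leqslant0$.

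For (ii) and (iv) the analogous expansions give $\operatorname{Re}\psi(i\rho,i\tau,\xi+i\eta)=-\alpha\rho\tau-\beta\tau\eta$ and $\operatorname{Re}\psi(i\rho,i\tau,\xi+i\eta)=-\alpha\tau^2-\beta\tau\eta$, so I need $\tau\eta\geqslant0$; this is the one step that is an observation rather than a computation. It follows because $\rho\tau\geqslant\tfrac12(1+3\rho^2)\geqslant\tfrac12>0$ forces $\rho$ and $\tau$ to be nonzero with the same sign, whence $\rho\eta\geqslant0$ yields $\tau\eta\geqslant0$. (The same inequality shows $\rho\neq0$, which is used tacitly throughout.) With $\tau\eta\geqslant0$ in hand, (ii) gives $\operatorname{Re}\psi\leqslant-\tfrac{\alpha}{2}(1+3\rho^2)\leqslant-\alpha/2$ and (iv) gives $\operatorname{Re}\psi\leqslant0$, so in each case $\psi(i\rho,i\tau,\xi+i\eta)\notin\Omega$, i.e.\ $\psi\in\Psi(\Omega)$, and Theorem~\ref{sdlemma} delivers the starlikeness of $f$. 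I do not expect a genuine obstacle here: apart from choosing the right $\psi$ and $\Omega$, the only non-routine point is the sign bookkeeping ($\tau\eta\geqslant0$) needed in parts (ii) and (iv).
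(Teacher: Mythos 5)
Your proposal is correct and follows essentially the same route as the paper: the same choices of $\Omega$ and $\psi_i$, an appeal to Theorem~\ref{sdlemma}, and the same admissibility computations. You even make explicit the sign observation $\tau\eta\geqslant 0$ (from $\rho\tau\geqslant\tfrac12(1+3\rho^2)>0$ and $\rho\eta\geqslant 0$) that the paper uses tacitly in parts (ii) and (iv).
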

\begin{proof}
	For $i=1,2,3,4$, let $\Omega_i$ be defined by $\Omega_1:=\{w\in \mathbb{C}: \operatorname{Re}w>-\alpha/2\}=:\Omega_2$ and $\Omega_3:=\{w \in \mathbb{C}:\operatorname{Re}w>0\}=:\Omega_4$ and the functions $\psi_i:\mathbb{C}^3\to\mathbb{C}$ be defined  by 
	\begin{align*}
	\psi_1(u,v,w)=&u(\alpha v+\beta w),\\
	\psi_2(u,v,w)=&v(\alpha u+\beta w),\\
	\psi_3(u,v,w)=&u(\alpha u+\beta w)
	\shortintertext{and}\quad 
	\psi_4(v,w)=&v(\alpha v+\beta w).
	\end{align*}
	The hypothesis of the theorem shows that the function $\psi_i$ satisfies  
	\[\psi_i(Q_{ST},Q_{CV},Q_{SD})\in \Omega_i\quad \text{for\ } i=1,2,3,4.\] 
	It then follows from Theorem \ref{sdlemma}  that the function $f$ is starlike provided $\psi_i\in \Psi(\Omega_i)$.
	We complete the proof by showing  that the function $\psi_i\in \Psi(\Omega_i)$.
	
	Let $\alpha \geqslant0$ and $\beta \geqslant 0$ and  $\rho,\tau,\xi,\eta \in \mathbb{R}$ satisfy the conditions $\rho\tau\geqslant(1+3\rho^2)/2$ and $\rho\eta\geqslant0$. Then,  we have
	\begin{align*}
	\operatorname{Re}\psi_1(i\rho,i\tau,\xi+i\eta)=-\alpha\rho\tau-\beta\rho\eta
	\leqslant -\alpha\frac{(1+3\rho^2)}{2}\leqslant-\frac{\alpha}{2} 
	\end{align*}
	and this proves that  the function $\psi_1\in \Psi(\Omega_1)$. For the function $\psi_2$, we have
	\begin{align*}
	\operatorname{Re}\psi_2(i\rho,i\tau,\xi+i\eta)=-\alpha\rho\tau-\beta\tau\eta
	\leqslant -\alpha\frac{(1+3\rho^2)}{2}\leqslant-\frac{\alpha}{2},
	\end{align*}and so  the function $\psi_2 \in \Psi(\Omega_2)$.
	Similarly, we have 
	\begin{align*}
	\operatorname{Re}\psi_3(i\rho,i\tau,\xi+i\eta)=-\alpha\rho^2-\beta\rho\eta \leqslant 0,
	\shortintertext{and }
	\operatorname{Re}\psi_4(i\rho,i\tau, \xi+i\eta)=-\alpha\tau^2-\beta\tau\eta \leqslant 0,
	\end{align*} so that  the functions $\psi_3\in \Psi(\Omega_3)$  and  $\psi_4   \in \Psi(\Omega_4)$.
\end{proof}

\begin{remark}Theorem \ref{sdtheorem1} (i) with  $\alpha=1$, $\beta=0$ reduces to a  sufficient condition for starlikeness obtained by Ravichandran et al.\ \cite{MR1966516}.
\end{remark}

Various authors \cite{MR1356393, MR1639004, MR0783572}have investigated on expressions involving the product of the terms $Q_{ST}$ and $Q_{CV}$ for the study of starlikeness of functions.  The following theorems discuss the influence of Schwarzian derivatives in many such cases.
\begin{theorem}
	Let $\alpha \geqslant0$ and $\beta \geqslant 0$. If the function $f \in \mathcal{A}$ satisfy any of the following inequalities
	\begin{enumerate}
		\item[(i)] $\operatorname{Re}\big(Q_{CV}(\alpha Q_{ST}+(1-\alpha)Q_{ST}^2+ \beta Q_{SD})\big)>-\alpha/2$,
		\item[(ii)] $\operatorname{Re}\big(Q_{ST}(\alpha Q_{CV}+(1-\alpha)Q_{CV}^2+ \beta Q_{SD})\big)>-\alpha/2$,
		\item[(iii)] $\operatorname{Re}\big(Q_{ST}(\alpha Q_{CV}+(1-\alpha)Q_{ST}^2+ \beta Q_{SD})\big)>-\alpha/2$,
		\item[(iv)] $\operatorname{Re}\big(Q_{CV}(\alpha Q_{ST}+(1-\alpha)Q_{CV}^2+ \beta Q_{SD})\big)>-\alpha/2$,
	\end{enumerate}
	then the function $f$ is starlike.
\end{theorem}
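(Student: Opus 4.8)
The plan is to run the same machinery as in the proof of Theorem~\ref{sdtheorem1}: realise each inequality as an admissibility statement and invoke Theorem~\ref{sdlemma}. For $i=1,2,3,4$ put $\Omega_i:=\{w\in\mathbb{C}:\operatorname{Re}w>-\alpha/2\}$ and define $\psi_i:\mathbb{C}^3\to\mathbb{C}$ by
\begin{align*}
\psi_1(u,v,w)&=v\bigl(\alpha u+(1-\alpha)u^2+\beta w\bigr),\\
\psi_2(u,v,w)&=u\bigl(\alpha v+(1-\alpha)v^2+\beta w\bigr),\\
\psi_3(u,v,w)&=u\bigl(\alpha v+(1-\alpha)u^2+\beta w\bigr),\\
\psi_4(u,v,w)&=v\bigl(\alpha u+(1-\alpha)v^2+\beta w\bigr).
\end{align*}
With these choices the four hypotheses say precisely that $\psi_i(Q_{ST},Q_{CV},Q_{SD})\in\Omega_i$, so by Theorem~\ref{sdlemma} it is enough to check $\psi_i\in\Psi(\Omega_i)$, i.e.\ that $\operatorname{Re}\psi_i(i\rho,i\tau,\xi+i\eta)\leqslant-\alpha/2$ whenever $\rho,\tau,\xi,\eta\in\mathbb{R}$ satisfy $\rho\tau\geqslant(1+3\rho^2)/2$ and $\rho\eta\geqslant0$.

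Next I would substitute $u=i\rho$, $v=i\tau$, $w=\xi+i\eta$. The point is that $u^2=-\rho^2$ and $v^2=-\tau^2$ are real, so the bracketed factor reduces to a real quantity plus $i(\alpha\rho+\beta\eta)$ (or $i(\alpha\tau+\beta\eta)$), and multiplication by the purely imaginary outer factor $i\tau$ or $i\rho$ sends the whole quadratic term into the imaginary part. Thus the $(1-\alpha)u^2$ or $(1-\alpha)v^2$ summand contributes nothing to the real part, and one finds
\begin{align*}
\operatorname{Re}\psi_1(i\rho,i\tau,\xi+i\eta)&=\operatorname{Re}\psi_4(i\rho,i\tau,\xi+i\eta)=-\alpha\rho\tau-\beta\tau\eta,\\
\operatorname{Re}\psi_2(i\rho,i\tau,\xi+i\eta)&=\operatorname{Re}\psi_3(i\rho,i\tau,\xi+i\eta)=-\alpha\rho\tau-\beta\rho\eta.
\end{align*}

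Finally I would push the two admissibility constraints through. For $\psi_2$ and $\psi_3$ the term $\beta\rho\eta$ is $\geqslant0$ directly from $\rho\eta\geqslant0$ and $\beta\geqslant0$. For $\psi_1$ and $\psi_4$ I first note that $\rho\tau\geqslant(1+3\rho^2)/2>0$ forces $\rho$ and $\tau$ to be nonzero and of equal sign, so $\rho\eta\geqslant0$ yields $\tau\eta\geqslant0$ and hence $\beta\tau\eta\geqslant0$. In all four cases the remaining term satisfies $\alpha\rho\tau\geqslant\alpha(1+3\rho^2)/2\geqslant\alpha/2$, so $\operatorname{Re}\psi_i(i\rho,i\tau,\xi+i\eta)\leqslant-\alpha\rho\tau\leqslant-\alpha/2$, giving $\psi_i\in\Psi(\Omega_i)$ and completing the proof. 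No genuine obstacle arises; the only steps needing a moment's attention are the observation that the even power $u^2$ (resp.\ $v^2$) becomes real on the imaginary axis and is therefore invisible to $\operatorname{Re}\psi_i$, and the small sign argument $\rho\eta\geqslant0\Rightarrow\tau\eta\geqslant0$ that neutralises the Schwarzian term in cases (i) and (iv).
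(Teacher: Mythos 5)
Your proposal is correct and follows essentially the same route as the paper: the same sets $\Omega_i$, the same admissible functions $\psi_i$, and the same computation of $\operatorname{Re}\psi_i(i\rho,i\tau,\xi+i\eta)$ followed by the bound $-\alpha\rho\tau\leqslant-\alpha(1+3\rho^2)/2\leqslant-\alpha/2$. The only difference is that you spell out the step $\rho\tau>0\Rightarrow\tau\eta\geqslant 0$ needed to discard the $-\beta\tau\eta$ term in cases (i) and (iv), which the paper leaves implicit.
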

\begin{proof}
	Let $\Omega$ be defined by $\Omega:=\{w\in \mathbb{C}: \operatorname{Re}w>-\alpha/2\}$ and for $i=1,2,3,4$, let the functions $\psi_i:\mathbb{C}^3\to\mathbb{C}$ be defined  by 
	\begin{align*}
	\psi_1(u,v,w)=&v(\alpha u+(1-\alpha)u^2+\beta w),\\
	\psi_2(u,v,w)=&u(\alpha v+(1-\alpha)v^2+\beta w),\\
	\psi_3(u,v,w)=&u(\alpha v+(1-\alpha)u^2+\beta w)\\ 
	\shortintertext{and}\quad 
	\psi_4(v,w)=&v(\alpha u+(1-\alpha)v^2+\beta w).
	\end{align*}
	The hypothesis of the theorem shows that the function $\psi_i$ satisfies  
	\[\psi_i(Q_{ST},Q_{CV},Q_{SD})\in \Omega\quad \text{for\ } i=1,2,3,4. \]
	It then follows from Theorem \ref{sdlemma}  that the function $f$ is starlike provided $\psi_i\in \Psi(\Omega)$.
	We complete the proof by showing  that the function $\psi_i\in \Psi(\Omega_i)$.
	
	Let $\alpha \geqslant0$ and $\beta \geqslant 0$ and  $\rho,\tau,\xi,\eta \in \mathbb{R}$ satisfy the conditions $\rho\tau\geqslant(1+3\rho^2)/2$ and $\rho\eta\geqslant0$. Then,  we have
	\begin{align*}
	\operatorname{Re}\psi_1(i\rho,i\tau,\xi+i\eta)=-\alpha\rho\tau-\beta\tau\eta
	\leqslant -\frac{\alpha}{2}(1+3\rho^2)\leqslant-\frac{\alpha}{2},
	\end{align*}
	and this proves that  the function $\psi_1\in \Psi(\Omega)$. For the function $\psi_2$, we have
	\begin{align*}
	\operatorname{Re}\psi_2(i\rho,i\tau,\xi+i\eta)=-\alpha\rho\tau-\beta\rho\eta,
	\leqslant -\alpha\frac{(1+3\rho^2)}{2}\leqslant-\frac{\alpha}{2},
	\end{align*}and so  the function $\psi_2 \in \Psi(\Omega_2)$.
	Similarly, we have 
	\begin{align*}
	\operatorname{Re}\psi_3(i\rho,i\tau,\xi+i\eta)=-\alpha\rho\tau-\beta\rho\eta\leqslant-\frac{\alpha}{2}(1+3\rho^2) \leqslant-\frac{\alpha}{2},
	\shortintertext{and }
	\operatorname{Re}\psi_4(i\rho,i\tau, \xi+i\eta)=-\alpha\rho\tau-\beta\tau\eta \leqslant-\frac{\alpha}{2}(1+3\rho^2)\leqslant -\frac{\alpha}{2},
	\end{align*} so that  the functions $\psi_3\in \Psi(\Omega)$  and  $\psi_4   \in \Psi(\Omega)$.
\end{proof}
\begin{theorem}\label{sdtheorem3}
	Let $\alpha \geqslant0$ and $\beta \geqslant 0$. If the function $f \in \mathcal{A}$ satisfy any of the following inequalities
	\begin{enumerate}
		\item[(i)] $\operatorname{Re}\big(Q_{ST}(\alpha Q_{ST}+(1-\alpha)Q_{ST}^2+ \beta Q_{SD})\big)>0$
		\item[(ii)] $\operatorname{Re}\big(Q_{CV}(\alpha Q_{CV}+(1-\alpha)Q_{CV}^2+ \beta Q_{SD})\big)>0$,
		\item[(iii)] $\operatorname{Re}\big(Q_{ST}(\alpha Q_{ST}+(1-\alpha)Q_{CV}^2+ \beta Q_{SD})\big)>0$,
		\item[(iv)] $\operatorname{Re}\big(Q_{CV}(\alpha Q_{CV}+(1-\alpha)Q_{ST}^2+ \beta Q_{SD})\big)>0$,
	\end{enumerate}
	then the function $f$ is starlike.
\end{theorem}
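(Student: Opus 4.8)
The plan is to mirror the method of the two preceding theorems. For each of the four cases I would introduce the set $\Omega:=\{w\in\mathbb{C}:\operatorname{Re}w>0\}$ and the polynomial maps $\psi_i:\mathbb{C}^3\to\mathbb{C}$ given by
\[
\psi_1(u,v,w)=u\big(\alpha u+(1-\alpha)u^2+\beta w\big),\qquad \psi_2(u,v,w)=v\big(\alpha v+(1-\alpha)v^2+\beta w\big),
\]
\[
\psi_3(u,v,w)=u\big(\alpha u+(1-\alpha)v^2+\beta w\big),\qquad \psi_4(u,v,w)=v\big(\alpha v+(1-\alpha)u^2+\beta w\big),
\]
so that the hypothesis in (i)--(iv) is precisely $\psi_i(Q_{ST},Q_{CV},Q_{SD})\in\Omega$. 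By Theorem \ref{sdlemma}, starlikeness of $f$ then follows once it is shown that $\psi_i\in\Psi(\Omega)$, i.e.\ that $\operatorname{Re}\psi_i(i\rho,i\tau,\xi+i\eta)\leqslant 0$ whenever $\rho\tau\geqslant(1+3\rho^2)/2$ and $\rho\eta\geqslant 0$.

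The heart of the matter is this admissibility check, and the key observation is that on the imaginary axis a monomial of the form $Q_AQ_B^2$ (with $A,B\in\{ST,CV\}$) becomes $(ia)(ib)^2=-iab^2$ with $a,b\in\{\rho,\tau\}$, which is purely imaginary and hence contributes nothing to the real part; thus the $(1-\alpha)$-term always disappears. After substitution one is left with
\[
\operatorname{Re}\psi_1(i\rho,i\tau,\xi+i\eta)=-\alpha\rho^2-\beta\rho\eta,\qquad \operatorname{Re}\psi_3(i\rho,i\tau,\xi+i\eta)=-\alpha\rho^2-\beta\rho\eta,
\]
\[
\operatorname{Re}\psi_2(i\rho,i\tau,\xi+i\eta)=-\alpha\tau^2-\beta\tau\eta,\qquad \operatorname{Re}\psi_4(i\rho,i\tau,\xi+i\eta)=-\alpha\tau^2-\beta\tau\eta.
\]
For $\psi_1$ and $\psi_3$ the bound $\leqslant 0$ is immediate from $\alpha,\beta\geqslant 0$, $\rho^2\geqslant 0$ and $\rho\eta\geqslant 0$. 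For $\psi_2$ and $\psi_4$ I would first note that $\rho\tau\geqslant(1+3\rho^2)/2>0$ forces $\rho$ and $\tau$ to have the same sign, so that $\rho\eta\geqslant 0$ gives $\tau\eta\geqslant 0$ as well, whence $-\alpha\tau^2-\beta\tau\eta\leqslant 0$. This establishes $\psi_i\in\Psi(\Omega)$ for $i=1,2,3,4$ and completes the proof.

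The only point worth flagging is the deduction $\tau\eta\geqslant 0$ from the admissibility conditions, which is what makes cases (ii) and (iv) go through; apart from that, the computation is the same routine bookkeeping already used in Theorem \ref{sdtheorem1} and the preceding theorem, the extra cubic term being harmless precisely because it is purely imaginary when evaluated at $(i\rho,i\tau,\xi+i\eta)$. I do not anticipate any genuine obstacle.
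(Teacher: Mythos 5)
Your proposal is correct and follows essentially the same route as the paper: the same four admissible functions $\psi_i$, the same target set $\Omega=\{w:\operatorname{Re}w>0\}$, and the same real-part computations, with the cubic term dropping out because it is purely imaginary on the test points. In fact you make explicit a small step the paper leaves implicit, namely that $\rho\tau\geqslant(1+3\rho^2)/2>0$ forces $\rho$ and $\tau$ to share a sign so that $\tau\eta\geqslant 0$, which is exactly what justifies the bounds for $\psi_2$ and $\psi_4$.
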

\begin{proof}
	Let $\Omega$ be defined by $\Omega:=\{w\in \mathbb{C}: \operatorname{Re}w>0\}$ and for $i=1,2,3,4$, let the functions $\psi_i:\mathbb{C}^3\to\mathbb{C}$ be defined  by 
	\begin{align*}
	\psi_1(u,v,w)=&u(\alpha u+(1-\alpha)u^2+\beta w),\\
	\psi_2(u,v,w)=&v(\alpha v+(1-\alpha)v^2+\beta w),\\
	\psi_3(u,v,w)=&u(\alpha u+(1-\alpha)v^2+\beta w)
	\shortintertext{and} 
	\psi_4(v,w)=&v(\alpha v+(1-\alpha)u^2+\beta w).
	\end{align*}
	The hypothesis of the theorem shows that the function $\psi_i$ satisfies  
	\[\psi_i(Q_{ST},Q_{CV},Q_{SD})\in \Omega\quad \text{for\ } i=1,2,3,4.\] 
	It then follows from Theorem \ref{sdlemma}  that the function $f$ is starlike provided $\psi_i\in \Psi(\Omega)$.
	We complete the proof by showing  that the function $\psi_i\in \Psi(\Omega)$.
	
	Let $\alpha \geqslant0$ and $\beta \geqslant 0$ and  $\rho,\tau,\xi,\eta \in \mathbb{R}$ satisfy the conditions $\rho\tau\geqslant(1+3\rho^2)/2$ and $\rho\eta\geqslant0$. Then,  we have
	\begin{align*}
	\operatorname{Re}\psi_1(i\rho,i\tau,\xi+i\eta)=-\alpha\rho^2-\beta\rho\eta
	\leqslant 0,
	\end{align*}
	and this proves that  the function $\psi_1\in \Psi(\Omega)$. For the function $\psi_2$, we have
	\begin{align*}
	\operatorname{Re}\psi_2(i\rho,i\tau,\xi+i\eta)=-\alpha\tau^2-\beta\tau\eta
	\leqslant 0,
	\end{align*}and so  the function $\psi_2 \in \Psi(\Omega_2)$.
	Similarly, we have 
	\begin{align*}
	\operatorname{Re}\psi_3(i\rho,i\tau,\xi+i\eta)=-\alpha\rho^2-\beta\rho\eta\leqslant 0,
	\shortintertext{and }
	\operatorname{Re}\psi_4(i\rho,i\tau, \xi+i\eta)=-\alpha\tau^2-\beta\tau\eta \leqslant 0,
	\end{align*} so that  the functions $\psi_3\in \Psi(\Omega)$  and  $\psi_4   \in \Psi(\Omega)$.
\end{proof}
\begin{remark}
	For $\alpha=1$ and $\beta=1$ in Part(iv) of Theorem \ref{sdtheorem3}, the obtained result is same as the one discussed by Miller and Mocanu \cite[pp.247]{MR1760285} for the expression $\operatorname{Re}\big(Q_{CV}^2+Q_{SD}\big)>0$.
\end{remark}
Using the theory of differential subordination, Owa\ and\  Obradovi\'{c} \cite{MR1071050} proved that the function $f \in \mathcal{A}$ is starlike, if $\operatorname{Re}\big((Q_{CV}^2/2)+Q_{SD}\big)>0$. In a generalised manner, we prove for some other cases as well.
\begin{theorem}
	Let $\alpha>0$ and $\beta \geqslant 0$. If the function $f \in \mathcal{A}$ satisfy any of the following inequalities
	\begin{enumerate}
		\item[(i)] $\operatorname{Re}\big(\beta Q_{SD}Q_{ST}+\alpha\left(1+Q_{CV}\right)^2\big)>\alpha$,
		\item[(ii)] $\operatorname{Re}\big(\beta Q_{SD}Q_{ST}+\alpha\left(1+Q_{ST}\right)^2\big)>\alpha$,
		\item[(iii)] $\operatorname{Re}\big(\beta Q_{SD}Q_{CV}+\alpha\left(1+Q_{CV}\right)^2\big)>\alpha$,
		\item[(iv)] $\operatorname{Re}\big(\beta Q_{SD}Q_{CV}+\alpha\left(1+Q_{ST}\right)^2\big)>\alpha$,
	\end{enumerate}
	then the function $f$ is starlike.
\end{theorem}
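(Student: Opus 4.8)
The plan is to follow the template of the proofs of Theorems~\ref{sdtheorem1}--\ref{sdtheorem3}. Writing $u=Q_{ST}$, $v=Q_{CV}$, $w=Q_{SD}$, set $\Omega:=\{w\in\mathbb{C}:\operatorname{Re}w>\alpha\}$ and introduce the polynomials $\psi_i\colon\mathbb{C}^3\to\mathbb{C}$ given by
\[
\psi_1(u,v,w)=\beta wu+\alpha(1+v)^2,\qquad \psi_2(u,v,w)=\beta wu+\alpha(1+u)^2,
\]
\[
\psi_3(u,v,w)=\beta wv+\alpha(1+v)^2,\qquad \psi_4(u,v,w)=\beta wv+\alpha(1+u)^2.
\]
Then each of the four hypotheses says exactly that $\operatorname{Re}\psi_i(Q_{ST},Q_{CV},Q_{SD})>\alpha$, i.e.\ $\psi_i(Q_{ST},Q_{CV},Q_{SD})\in\Omega$, so by Theorem~\ref{sdlemma} it suffices to verify that $\psi_i\in\Psi(\Omega)$ for $i=1,2,3,4$; that is, $\psi_i(i\rho,i\tau,\xi+i\eta)\notin\Omega$ for all real $\rho,\tau,\xi,\eta$ with $\rho\tau\geqslant(1+3\rho^2)/2$ and $\rho\eta\geqslant0$.

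To do this I would substitute $u=i\rho$, $v=i\tau$, $w=\xi+i\eta$ and take real parts, using $(1+i\rho)^2=1-\rho^2+2i\rho$, the analogous identity in $\tau$, and $\operatorname{Re}\big((\xi+i\eta)(i\rho)\big)=-\rho\eta$. This yields
\[
\operatorname{Re}\psi_1(i\rho,i\tau,\xi+i\eta)=\alpha-\alpha\tau^2-\beta\rho\eta,\qquad \operatorname{Re}\psi_2(i\rho,i\tau,\xi+i\eta)=\alpha-\alpha\rho^2-\beta\rho\eta,
\]
\[
\operatorname{Re}\psi_3(i\rho,i\tau,\xi+i\eta)=\alpha-\alpha\tau^2-\beta\tau\eta,\qquad \operatorname{Re}\psi_4(i\rho,i\tau,\xi+i\eta)=\alpha-\alpha\rho^2-\beta\tau\eta.
\]
For $\psi_1$ and $\psi_2$ the bound $\operatorname{Re}\psi_i(i\rho,i\tau,\xi+i\eta)\leqslant\alpha$ is immediate, since $\alpha>0$, $\beta\geqslant0$ and $\rho\eta\geqslant0$ make the two subtracted terms nonnegative; hence $\psi_i(i\rho,i\tau,\xi+i\eta)\notin\Omega$.

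The one point that will take a short argument rather than a one-liner is the term $-\beta\tau\eta$ that appears for $\psi_3$ and $\psi_4$, because the admissibility hypotheses supply $\rho\eta\geqslant0$ but not $\tau\eta\geqslant0$ directly. I would dispose of it by observing that $\rho\tau\geqslant(1+3\rho^2)/2$ forces $\rho\neq0$ (otherwise the left-hand side vanishes while the right-hand side equals $1/2$), whence $\rho\tau\geqslant(1+3\rho^2)/2>0$, so $\rho$ and $\tau$ have the same sign and $\tau\eta=(\tau/\rho)(\rho\eta)\geqslant0$. Then $\alpha\tau^2+\beta\tau\eta\geqslant0$ and $\alpha\rho^2+\beta\tau\eta\geqslant0$, so $\operatorname{Re}\psi_3(i\rho,i\tau,\xi+i\eta)\leqslant\alpha$ and $\operatorname{Re}\psi_4(i\rho,i\tau,\xi+i\eta)\leqslant\alpha$ as well. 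Hence $\psi_i\in\Psi(\Omega)$ for $i=1,2,3,4$, and Theorem~\ref{sdlemma} gives that $f$ is starlike in each of the four cases.
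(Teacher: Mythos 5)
Your proof is correct and follows essentially the same route as the paper: the same choice of $\Omega=\{w:\operatorname{Re}w>\alpha\}$, the same four admissible functions $\psi_i$, and the same real-part computations at $(i\rho,i\tau,\xi+i\eta)$. The only difference is that you explicitly justify $\tau\eta\geqslant 0$ (via $\rho\tau\geqslant(1+3\rho^2)/2>0$ forcing $\rho$ and $\tau$ to share a sign), a step the paper uses silently in the $\psi_3$ and $\psi_4$ cases; this is a worthwhile clarification rather than a deviation.
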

\begin{proof}
	Let $\Omega$ be defined by $\Omega:=\{w\in \mathbb{C}: \operatorname{Re}w>\alpha\}$ and for $i=1,2,3,4$, let the functions $\psi_i:\mathbb{C}^3\to\mathbb{C}$ be defined  by 
	\begin{align*}
	\psi_1(u,v,w)=&\beta uw+ \alpha(1+v)^2, \quad 
	\psi_2(u,v,w)=\beta uw+ \alpha(1+u)^2,\\
	\psi_3(u,v,w)=&\beta vw+\alpha\left(1+v\right)^2\quad 
	\text{and}\quad 
	\psi_4(v,w)=\beta vw+\alpha(1+u)^2.
	\end{align*}
	The hypothesis of the theorem shows that the function $\psi_i$ satisfies  
	\[\psi_i(Q_{ST},Q_{CV},Q_{SD})\in \Omega\quad \text{for\ } i=1,2,3,4.\] 
	It then follows from Theorem \ref{sdlemma}  that the function $f$ is starlike provided $\psi_i\in \Psi(\Omega)$.
	We complete the proof by showing  that the function $\psi_i\in \Psi(\Omega)$.
	
	Let $\alpha \geqslant0$ and $\beta \geqslant 0$ and  $\rho,\tau,\xi,\eta \in \mathbb{R}$ satisfy the conditions $\rho\tau\geqslant(1+3\rho^2)/2$ and $\rho\eta\geqslant0$. Then,  we have
	\begin{align*}
	\operatorname{Re}\psi_1(i\rho,i\tau,\xi+i\eta)=-\beta\rho\eta-\alpha\tau^2+\alpha
	\leqslant \alpha,
	\end{align*}
	and this proves that  the function $\psi_1\in \Psi(\Omega)$. For the function $\psi_2$, we have
	\begin{align*}
	\operatorname{Re}\psi_2(i\rho,i\tau,\xi+i\eta)=-\alpha\rho^2-\beta\rho\eta+\alpha
	\leqslant \alpha,
	\end{align*}and so  the function $\psi_2 \in \Psi(\Omega_2)$.
	Similarly, we have 
	\begin{align*}
	\operatorname{Re}\psi_3(i\rho,i\tau,\xi+i\eta)=-\alpha\tau^2-\beta\tau\eta+\alpha\leqslant \alpha,
	\shortintertext{and }
	\operatorname{Re}\psi_4(i\rho,i\tau, \xi+i\eta)=-\alpha\rho^2-\beta\tau\eta+\alpha \leqslant \alpha,
	\end{align*} so that  the functions $\psi_3\in \Psi(\Omega)$  and  $\psi_4   \in \Psi(\Omega)$.
\end{proof}
Miller and Mocanu  \cite{MR1760285} determined sufficient conditions relating the starlikeness of functions in class $\mathcal{A}$ and Schwarzian derivatives. As an application to the discussion, they obtained that for parameters $\alpha$, $\beta$, the sufficient conditions $\operatorname{Re}\big(\alpha Q_{ST}+\beta Q_{CV}+ Q_{ST}Q_{SD}\big)>0$ and $\operatorname{Re}\big(Q_{ST}Q_{CV}+ Q_{ST}Q_{SD}\big)>-1/2$ imply starlikeness. The forthcoming theorems follow as a generalisation of above observations.
\begin{theorem}
	Let $\alpha>0$ and $\beta \geqslant 0$. If the function $f \in \mathcal{A}$ satisfy any of the following inequalities
	\begin{enumerate}
		\item[(i)] $\operatorname{Re}\big(\beta Q_{SD}Q_{ST}+\alpha Q_{CV}\left(1+Q_{ST}\right)\big)>-\alpha/2$,
		\item[(ii)] $\operatorname{Re}\big(\beta Q_{SD}Q_{CV}+\alpha Q_{ST}\left(1+Q_{CV}\right)\big)>-\alpha/2$,
		\item[(iii)] $\operatorname{Re}\big(\beta Q_{SD}Q_{CV}+\alpha Q_{CV}\left(1+Q_{ST}\right)\big)>-\alpha/2$,
		\item[(iv)] $\operatorname{Re}\big(\beta Q_{SD}Q_{ST}+\alpha Q_{ST}\left(1+Q_{CV}\right)\big)>-\alpha/2$,
	\end{enumerate}
	then the function $f$ is starlike.
\end{theorem}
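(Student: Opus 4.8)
The plan is to follow the admissibility scheme used in the preceding theorems of this section and reduce everything to Theorem~\ref{sdlemma}. Since all four hypotheses have the shape $\operatorname{Re}(\,\cdot\,)>-\alpha/2$, I would set $\Omega:=\{w\in\mathbb{C}:\operatorname{Re}w>-\alpha/2\}$ and define $\psi_i:\mathbb{C}^3\to\mathbb{C}$ by
\begin{align*}
\psi_1(u,v,w)&=\beta wu+\alpha v(1+u), & \psi_2(u,v,w)&=\beta wv+\alpha u(1+v),\\
\psi_3(u,v,w)&=\beta wv+\alpha v(1+u), & \psi_4(u,v,w)&=\beta wu+\alpha u(1+v).
\end{align*}
With $(u,v,w)=(Q_{ST},Q_{CV},Q_{SD})$, hypotheses (i)--(iv) say exactly that $\psi_i(Q_{ST},Q_{CV},Q_{SD})\in\Omega$, so it remains to check $\psi_i\in\Psi(\Omega)$, that is, the admissibility condition \eqref{sdlemma1} under \eqref{sdlemmacond}.

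Next I would fix real $\rho,\tau,\xi,\eta$ with $\rho\tau\geqslant(1+3\rho^2)/2$ and $\rho\eta\geqslant0$ and expand the products of purely imaginary arguments. A routine computation gives
\begin{align*}
\operatorname{Re}\psi_1(i\rho,i\tau,\xi+i\eta)&=-\beta\rho\eta-\alpha\rho\tau, & \operatorname{Re}\psi_2(i\rho,i\tau,\xi+i\eta)&=-\beta\tau\eta-\alpha\rho\tau,\\
\operatorname{Re}\psi_3(i\rho,i\tau,\xi+i\eta)&=-\beta\tau\eta-\alpha\rho\tau, & \operatorname{Re}\psi_4(i\rho,i\tau,\xi+i\eta)&=-\beta\rho\eta-\alpha\rho\tau.
\end{align*}
The inequality $\rho\tau\geqslant(1+3\rho^2)/2$ forces $\rho\neq0$ and $\rho,\tau$ of the same sign, whence $\rho\eta\geqslant0$ additionally gives $\tau\eta\geqslant0$; together with $\alpha,\beta\geqslant0$ each of the four real parts is at most $-\alpha\rho\tau\leqslant-\tfrac{\alpha}{2}(1+3\rho^2)\leqslant-\tfrac{\alpha}{2}$. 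Hence $\psi_i(i\rho,i\tau,\xi+i\eta)\notin\Omega$, so $\psi_i\in\Psi(\Omega)$, and Theorem~\ref{sdlemma} yields that $f$ is starlike.

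The only genuinely substantive point — the remainder being the same purely imaginary bookkeeping already carried out earlier in this section — is the observation that $\tau\eta\geqslant0$ is not assumed but is a consequence of \eqref{sdlemmacond}; this is exactly what makes the estimates for $\psi_2$ and $\psi_3$ go through, since there the cross term is $\beta\tau\eta$ rather than $\beta\rho\eta$. I anticipate no other obstacle: the sharp threshold $-\alpha/2$ comes directly from $\alpha\rho\tau\geqslant\alpha(1+3\rho^2)/2\geqslant\alpha/2$, just as in Theorem~\ref{sdtheorem1}.
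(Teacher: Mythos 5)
Your proposal is correct and follows essentially the same route as the paper: the same choice of $\Omega$, the same four admissible functions $\psi_i$, the same real-part computations, and the same appeal to Theorem~\ref{sdlemma}. Your explicit observation that $\rho\tau\geqslant(1+3\rho^2)/2$ forces $\rho$ and $\tau$ to have the same sign, so that $\rho\eta\geqslant0$ yields $\tau\eta\geqslant0$, is a step the paper uses implicitly without comment, and spelling it out is a welcome clarification.
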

\begin{proof}
	Let $\Omega$ be defined by $\Omega:=\{w\in \mathbb{C}: \operatorname{Re}w>-\alpha/2\}$ and for $i=1,2,3,4$, let the functions $\psi_i:\mathbb{C}^3\to\mathbb{C}$ be defined  by 
	\begin{align*}
	\psi_1(u,v,w)=&\beta uw+ \alpha v(1+u), \quad 
	\psi_2(u,v,w)=\beta vw+ \alpha u(1+v),\\
	\psi_3(u,v,w)=&\beta vw+\alpha v\left(1+u\right)\quad 
	\text{and}\quad 
	\psi_4(v,w)=\beta uw+\alpha u(1+v).
	\end{align*}
	The hypothesis of the theorem shows that the function $\psi_i$ satisfies  
	\[\psi_i(Q_{ST},Q_{CV},Q_{SD})\in \Omega\quad \text{for\ } i=1,2,3,4.\] 
	It then follows from Theorem \ref{sdlemma}  that the function $f$ is starlike provided $\psi_i\in \Psi(\Omega)$.
	We complete the proof by showing  that the function $\psi_i\in \Psi(\Omega)$.
	
	Let $\alpha \geqslant0$ and $\beta \geqslant 0$ and  $\rho,\tau,\xi,\eta \in \mathbb{R}$ satisfy the conditions $\rho\tau\geqslant(1+3\rho^2)/2$ and $\rho\eta\geqslant0$. Then,  we have
	\begin{align*}
	\operatorname{Re}\psi_1(i\rho,i\tau,\xi+i\eta)=-\beta\rho\eta-\alpha\rho\tau\leqslant-\frac{\alpha}{2}(1+3\rho^2)
	\leqslant -\frac{\alpha}{2},
	\end{align*}
	and this proves that  the function $\psi_1\in \Psi(\Omega)$. For the function $\psi_2$, we have
	\begin{align*}
	\operatorname{Re}\psi_2(i\rho,i\tau,\xi+i\eta)=-\alpha\rho\tau-\beta\tau\eta
	\leqslant -\frac{\alpha}{2}(1+3\rho^2)\leqslant-\frac{\alpha}{2},
	\end{align*}and so  the function $\psi_2 \in \Psi(\Omega_2)$.
	Similarly, we have 
	\begin{align*}
	\operatorname{Re}\psi_3(i\rho,i\tau,\xi+i\eta)=-\alpha\rho\tau-\beta\tau\eta\leqslant -\frac{\alpha}{2}(1+3\rho^2)\leqslant-\frac{\alpha}{2},
	\shortintertext{and }
	\operatorname{Re}\psi_4(i\rho,i\tau, \xi+i\eta)=-\alpha\rho\tau-\beta\rho\eta \leqslant-\frac{\alpha(1+3\rho^2)}{2}\leqslant-\frac{\alpha}{2},
	\end{align*} so that  the functions $\psi_3\in \Psi(\Omega)$  and  $\psi_4   \in \Psi(\Omega)$.
\end{proof}
\begin{theorem}
	Let $\alpha>0$ and $\beta \geqslant 0$. If the function $f \in \mathcal{A}$ satisfy any of the following inequalities
	\begin{enumerate}
		\item[(i)] $\operatorname{Re}\big(\beta Q_{SD}Q_{ST}+\alpha Q_{ST}\left(1+Q_{ST}\right)\big)>0$,
		\item[(ii)] $\operatorname{Re}\big(\beta Q_{SD}Q_{CV}+\alpha Q_{ST}\left(1+Q_{ST}\right)\big)>0$,
		\item[(iii)] $\operatorname{Re}\big(\beta Q_{SD}Q_{ST}+\alpha Q_{CV}\left(1+Q_{CV}\right)\big)>0$,
		\item[(iv)] $\operatorname{Re}\big(\beta Q_{SD}Q_{CV}+\alpha Q_{CV}\left(1+Q_{CV}\right)\big)>0$,
	\end{enumerate}
	then the function $f$ is starlike.
\end{theorem}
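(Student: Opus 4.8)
The plan is to reuse, essentially verbatim, the admissibility machinery of the preceding theorems. First I would set $\Omega := \{w \in \mathbb{C} : \operatorname{Re} w > 0\}$, the common target of the four inequalities, and read off from the products in $(i)$--$(iv)$ the four test functions $\psi_i : \mathbb{C}^3 \to \mathbb{C}$: writing $u, v, w$ for $Q_{ST}, Q_{CV}, Q_{SD}$,
\[
\psi_1(u,v,w) = \beta uw + \alpha u(1+u), \qquad \psi_2(u,v,w) = \beta vw + \alpha u(1+u),
\]
\[
\psi_3(u,v,w) = \beta uw + \alpha v(1+v), \qquad \psi_4(u,v,w) = \beta vw + \alpha v(1+v).
\]
By construction the hypothesis of part $(i)$ says exactly that $\psi_i(Q_{ST}, Q_{CV}, Q_{SD}) \in \Omega$, so by Theorem \ref{sdlemma} it suffices to check the admissibility condition $\psi_i \in \Psi(\Omega)$.

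Next I would perform the standard substitution $u = i\rho$, $v = i\tau$, $w = \xi + i\eta$ with $\rho, \tau, \xi, \eta$ real satisfying $\rho\tau \geqslant (1+3\rho^2)/2$ and $\rho\eta \geqslant 0$, and take real parts. Since $\alpha(i\rho)(1+i\rho)$ and $\alpha(i\tau)(1+i\tau)$ have real parts $-\alpha\rho^2$ and $-\alpha\tau^2$, while $\beta(i\rho)(\xi+i\eta)$ and $\beta(i\tau)(\xi+i\eta)$ have real parts $-\beta\rho\eta$ and $-\beta\tau\eta$, I expect
\[
\operatorname{Re}\psi_1(i\rho,i\tau,\xi+i\eta) = -\alpha\rho^2 - \beta\rho\eta, \qquad \operatorname{Re}\psi_2(i\rho,i\tau,\xi+i\eta) = -\alpha\rho^2 - \beta\tau\eta,
\]
and likewise $\operatorname{Re}\psi_3 = -\alpha\tau^2 - \beta\rho\eta$ and $\operatorname{Re}\psi_4 = -\alpha\tau^2 - \beta\tau\eta$. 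With $\alpha > 0$, $\beta \geqslant 0$ and $\rho\eta \geqslant 0$ each of these is $\leqslant 0$, hence not in $\Omega$, so $\psi_i \in \Psi(\Omega)$ and $f$ is starlike by Theorem \ref{sdlemma}.

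The only point needing a brief extra remark — and the closest thing to an obstacle — is the nonnegativity of $\tau\eta$ in parts $(ii)$ and $(iv)$, where the Schwarzian term carries the factor $Q_{CV}$ rather than $Q_{ST}$. This follows since $\rho\tau \geqslant (1+3\rho^2)/2 \geqslant 1/2 > 0$ forces $\rho$ and $\tau$ to be nonzero of the same sign, so $\rho\eta \geqslant 0$ gives $\tau\eta \geqslant 0$ and hence $-\beta\tau\eta \leqslant 0$. Everything else duplicates the pattern of Theorems \ref{sdtheorem1} and \ref{sdtheorem3} and presents no real difficulty.
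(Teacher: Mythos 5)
Your proposal is correct and follows essentially the same route as the paper: the same choice of $\Omega$, the same four admissible functions $\psi_i$, and the same real-part computations. Your added justification that $\tau\eta\geqslant 0$ (since $\rho\tau\geqslant(1+3\rho^2)/2>0$ forces $\rho$ and $\tau$ to share a sign) is a detail the paper leaves implicit, and it is correctly argued.
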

\begin{proof} 	Let $\Omega$ be defined by $\Omega:=\{w\in \mathbb{C}: \operatorname{Re}w>0\}$ and for $i=1,2,3,4$, let the functions $\psi_i:\mathbb{C}^3\to\mathbb{C}$ be defined  by 
	\begin{alignat*}{3}
	\psi_1(u,v,w)=&\beta uw+ \alpha u(1+u), \quad 
	\psi_2(u,v,w)=\beta vw+ \alpha u(1+u),\\
	\psi_3(u,v,w)=&\beta uw+\alpha v\left(1+v\right)\quad 
	\text{and}\quad 
	\psi_4(v,w)=\beta vw+\alpha v(1+v).
	\end{alignat*}
	The hypothesis of the theorem shows that the function $\psi_i$ satisfies  
	\[\psi_i(Q_{ST},Q_{CV},Q_{SD})\in \Omega\quad \text{for\ } i=1,2,3,4.\] 
	It then follows from Theorem \ref{sdlemma}  that the function $f$ is starlike provided $\psi_i\in \Psi(\Omega)$.
	We complete the proof by showing  that the function $\psi_i\in \Psi(\Omega)$.
	
	Let $\alpha>0$ and $\beta \geqslant 0$ and  $\rho,\tau,\xi,\eta \in \mathbb{R}$ satisfy the conditions $\rho\tau\geqslant(1+3\rho^2)/2$ and $\rho\eta\geqslant0$. Then,  we have
	\begin{align*}
	\operatorname{Re}\psi_1(i\rho,i\tau,\xi+i\eta)=-\alpha\rho^2-\beta\rho\eta
	\leqslant 0,
	\end{align*}
	and this proves that  the function $\psi_1\in \Psi(\Omega)$. For the function $\psi_2$, we have
	\begin{align*}
	\operatorname{Re}\psi_2(i\rho,i\tau,\xi+i\eta)=-\alpha\rho^2-\beta\tau\eta
	\leqslant 0,
	\end{align*}and so  the function $\psi_2 \in \Psi(\Omega_2)$.
	Similarly, we have 
	\begin{align*}
	\operatorname{Re}\psi_3(i\rho,i\tau,\xi+i\eta)=-\alpha\tau^2-\beta\rho\eta\leqslant 0,
	\shortintertext{and }
	\operatorname{Re}\psi_4(i\rho,i\tau, \xi+i\eta)=-\alpha\tau^2-\beta\tau\eta \leqslant 0,
	\end{align*} so that  the functions $\psi_3\in \Psi(\Omega)$  and  $\psi_4   \in \Psi(\Omega)$.
\end{proof}
Some authors \cite{MR1356393, MR1071050} considered the powers of the expressions $Q_{ST}$, $Q_{CV}$ and analysed their significance in the starlikeness of a function. The following theorems with Schwarzian derivatives are examined in a similar manner.

\begin{theorem}
	Let $0\leqslant\alpha\leqslant1$ and $\beta \geqslant 0$. If the function $f \in \mathcal{A}$ satisfy any of the following inequalities
	\begin{enumerate}
		\item[(i)] $\operatorname{Re}\big(\alpha Q_{ST}+(1-\alpha)Q_{ST}^2+\beta Q_{SD}Q_{CV}\big)>0$,
		\item[(ii)] $\operatorname{Re}\big(\alpha Q_{ST}+(1-\alpha)Q_{ST}^2+\beta Q_{SD}Q_{ST}\big)>0$,
		\item[(iii)] $\operatorname{Re}\big(\alpha Q_{ST}+(1-\alpha)Q_{CV}^2+\beta Q_{SD}Q_{ST}\big)>0$,
		\item[(iv)] $\operatorname{Re}\big(\alpha Q_{CV}+(1-\alpha)Q_{ST}^2+\beta Q_{SD}Q_{ST}\big)>0$,
		\item[(v)] $\operatorname{Re}\big(\alpha Q_{CV}+(1-\alpha)Q_{CV}^2+\beta Q_{SD}Q_{CV}\big)>0$,
		\item[(vi)] $\operatorname{Re}\big(\alpha Q_{CV}+(1-\alpha)Q_{ST}^2+\beta Q_{SD}Q_{CV}\big)>0$,
		\item[(vii)] $\operatorname{Re}\big(\alpha Q_{ST}+(1-\alpha)Q_{CV}^2+\beta Q_{SD}Q_{CV}\big)>0$,
		\item[(viii)] $\operatorname{Re}\big(\alpha Q_{CV}+(1-\alpha)Q_{CV}^2+\beta Q_{SD}Q_{ST}\big)>0$,
		
	\end{enumerate}
	then the function $f$ is starlike.
\end{theorem}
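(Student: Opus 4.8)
The plan is to deduce all eight implications from a single application of Theorem~\ref{sdlemma}, exactly as in the preceding theorems, taking $\Omega:=\{w\in\mathbb{C}:\operatorname{Re}w>0\}$. Writing $u,v,w$ for $Q_{ST},Q_{CV},Q_{SD}$ respectively, I would introduce the eight functions $\psi_k:\mathbb{C}^3\to\mathbb{C}$, one for each item in the order listed,
\begin{align*}
\psi_1(u,v,w)&=\alpha u+(1-\alpha)u^2+\beta vw, & \psi_2(u,v,w)&=\alpha u+(1-\alpha)u^2+\beta uw,\\
\psi_3(u,v,w)&=\alpha u+(1-\alpha)v^2+\beta uw, & \psi_4(u,v,w)&=\alpha v+(1-\alpha)u^2+\beta uw,\\
\psi_5(u,v,w)&=\alpha v+(1-\alpha)v^2+\beta vw, & \psi_6(u,v,w)&=\alpha v+(1-\alpha)u^2+\beta vw,\\
\psi_7(u,v,w)&=\alpha u+(1-\alpha)v^2+\beta vw, & \psi_8(u,v,w)&=\alpha v+(1-\alpha)v^2+\beta uw,
\end{align*}
so that the inequality in part (k) asserts precisely $\psi_k(Q_{ST},Q_{CV},Q_{SD})\in\Omega$. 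By Theorem~\ref{sdlemma} it then remains only to verify that $\psi_k\in\Psi(\Omega)$ for $k=1,\dots,8$.

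To do this I would substitute $u=i\rho$, $v=i\tau$, $w=\xi+i\eta$ with $\rho,\tau,\xi,\eta$ real satisfying \eqref{sdlemmacond} and read off the real part. The linear terms $\alpha i\rho$ and $\alpha i\tau$ are purely imaginary; $(1-\alpha)(i\rho)^2=-(1-\alpha)\rho^2$ and $(1-\alpha)(i\tau)^2=-(1-\alpha)\tau^2$; and $\beta(i\rho)(\xi+i\eta)$, $\beta(i\tau)(\xi+i\eta)$ have real parts $-\beta\rho\eta$ and $-\beta\tau\eta$. Hence in each of the eight cases $\operatorname{Re}\psi_k(i\rho,i\tau,\xi+i\eta)$ takes one of the four shapes $-(1-\alpha)\rho^2-\beta\rho\eta$, $-(1-\alpha)\rho^2-\beta\tau\eta$, $-(1-\alpha)\tau^2-\beta\rho\eta$, $-(1-\alpha)\tau^2-\beta\tau\eta$.

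Finally I would check that each such expression is $\leqslant 0$, hence not in $\Omega$. The hypothesis $\alpha\leqslant 1$ gives $1-\alpha\geqslant 0$, so the quadratic part is non-positive; and $\beta\geqslant 0$ together with $\rho\eta\geqslant 0$ handles the term $-\beta\rho\eta$. For the term $-\beta\tau\eta$ one uses that \eqref{sdlemmacond} forces $\rho\neq 0$ (otherwise $0\geqslant 1/2$), so $\rho\tau\geqslant(1+3\rho^2)/2>0$ shows that $\rho$ and $\tau$ have the same sign; combined with $\rho\eta\geqslant 0$ this yields $\tau\eta\geqslant 0$. Therefore $\operatorname{Re}\psi_k(i\rho,i\tau,\xi+i\eta)\leqslant 0$ for every $k$, so $\psi_k\in\Psi(\Omega)$ and $f$ is starlike. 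The computation is entirely routine; the only points demanding care are the bookkeeping of which of $\rho^2,\tau^2$ and which of $\rho\eta,\tau\eta$ occurs in each $\psi_k$, and the elementary observation that the admissibility conditions force $\tau\eta\geqslant 0$.
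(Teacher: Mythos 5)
Your proposal is correct and follows essentially the same route as the paper: the same eight admissible functions $\psi_k$, the same target set $\Omega=\{w:\operatorname{Re}w>0\}$, and the same real-part computations. In fact you supply a detail the paper leaves implicit, namely that $\rho\tau\geqslant(1+3\rho^2)/2>0$ forces $\rho$ and $\tau$ to share a sign, whence $\rho\eta\geqslant0$ gives $\tau\eta\geqslant0$ and the terms $-\beta\tau\eta$ are indeed non-positive.
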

\begin{proof}
	Let $\Omega$ be defined by $\Omega:=\{w\in \mathbb{C}: \operatorname{Re}w>0\}$ and for $i=1,2,\cdots 8$, let the functions $\psi_i:\mathbb{C}^3\to\mathbb{C}$ be defined  by 
	\begin{align*}
	\psi_1(u,v,w)=&\alpha u+(1-\alpha)u^2+\beta vw,\\ 
	\psi_2(u,v,w)=&\alpha u+(1-\alpha)u^2+\beta uw,\\
	\psi_3(u,v,w)=&\alpha u+(1-\alpha)v^2+\beta uw,\\ 
	\psi_4(u,v,w)=&\alpha v+(1-\alpha)u^2+\beta uw,\\
	\psi_5(u,v,w)=&\alpha v+(1-\alpha)v^2+\beta vw,\\ 
	\psi_6(u,v,w)=&\alpha v+(1-\alpha)u^2+\beta vw,\\
	\psi_7(u,v,w)=&\alpha u+(1-\alpha)v^2+\beta vw
	\shortintertext{and}
	\psi_8(v,w)=&\alpha v+(1-\alpha)v^2+\beta uw.
	\end{align*}
	The hypothesis of the theorem shows that the function $\psi_i$ satisfies  
	\[\psi_i(Q_{ST},Q_{CV},Q_{SD})\in \Omega\quad \text{for\ } i=1,2,\cdots 8.\] 
	It then follows from Theorem \ref{sdlemma}  that the function $f$ is starlike provided $\psi_i\in \Psi(\Omega)$.
	We complete the proof by showing  that the function $\psi_i\in \Psi(\Omega)$.
	
	Let $0\leqslant\alpha\leqslant1$ and $\beta \geqslant 0$ and  $\rho,\tau,\xi,\eta \in \mathbb{R}$ satisfy the conditions $\rho\tau\geqslant(1+3\rho^2)/2$ and $\rho\eta\geqslant0$. Then,  we have
	\begin{align*}
	\operatorname{Re}\psi_1(i\rho,i\tau,\xi+i\eta)=-(1-\alpha)\rho^2-\beta \tau\eta
	\leqslant 0,
	\end{align*}
	and this proves that  the function $\psi_1\in \Psi(\Omega)$. For the function $\psi_2$, we have
	\begin{align*}
	\operatorname{Re}\psi_2(i\rho,i\tau,\xi+i\eta)=-(1-\alpha)\rho^2-\beta\rho\eta
	\leqslant 0,
	\end{align*}and so  the function $\psi_2 \in \Psi(\Omega_2)$.
	Similarly, we have 
	\begin{align*}
	\operatorname{Re}\psi_3(i\rho,i\tau,\xi+i\eta)=-(1-\alpha)\tau^2-\beta\rho\eta\leqslant 0,
	\shortintertext{and }
	\operatorname{Re}\psi_4(i\rho,i\tau, \xi+i\eta)=-(1-\alpha)\rho^2-\beta\rho\eta \leqslant 0,
	\end{align*} so that  the functions $\psi_3\in \Psi(\Omega)$  and  $\psi_4   \in \Psi(\Omega)$. Proceeding in a similar way, we have
	\begin{align*}
	\operatorname{Re}\psi_5(i\rho,i\tau,\xi+i\eta)=-(1-\alpha)\tau^2-\beta \tau \eta
	\leqslant 0,
	\end{align*}
	and this proves that  the function $\psi_5\in \Psi(\Omega)$. For the function $\psi_6$, we have
	\begin{align*}
	\operatorname{Re}\psi_2(i\rho,i\tau,\xi+i\eta)=-(1-\alpha)\rho^2-\beta\tau\eta
	\leqslant 0,
	\end{align*}and so  the function $\psi_6 \in \Psi(\Omega_2)$.
	Similarly, we have 
	\begin{align*}
	\operatorname{Re}\psi_7(i\rho,i\tau,\xi+i\eta)=-(1-\alpha)\tau^2-\beta\tau\eta\leqslant 0,
	\shortintertext{and }
	\operatorname{Re}\psi_8(i\rho,i\tau, \xi+i\eta)=-(1-\alpha)\tau^2-\beta\rho\eta \leqslant 0,
	\end{align*} so that  the functions $\psi_7\in \Psi(\Omega)$  and  $\psi_8   \in \Psi(\Omega)$.

\end{proof}
\begin{theorem}\label{sdtheorem8}
	Let $0\leqslant\alpha\leqslant1$ and $\beta \geqslant 0$. If any of the following two inequalities hold for the function $f \in \mathcal{A}$,
	\begin{enumerate}
		
		\item[(i)] $\operatorname{Re}\big(Q_{CV}(\alpha Q_{ST}+(1-\alpha)Q_{CV}+\beta Q_{SD})\big)>-\alpha/2$,
		\item[(ii)] $\operatorname{Re}\big(Q_{ST}(\alpha Q_{CV}+(1-\alpha)Q_{ST}+\beta Q_{SD})\big)>-\alpha/2$,		
	\end{enumerate}
	then the function $f$ is starlike.
\end{theorem}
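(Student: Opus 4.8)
The plan is to follow the same admissibility scheme used throughout this section: identify the target set $\Omega$, package each inequality as the image of an admissible function $\psi_i$, and then verify the admissibility condition \eqref{sdlemma1}--\eqref{sdlemmacond} by a direct substitution $u=i\rho$, $v=i\tau$, $w=\xi+i\eta$. Concretely, I would set $\Omega:=\{w\in\mathbb{C}:\operatorname{Re}w>-\alpha/2\}$ and define $\psi_1,\psi_2:\mathbb{C}^3\to\mathbb{C}$ by
\begin{align*}
\psi_1(u,v,w)&=v\big(\alpha u+(1-\alpha)v+\beta w\big),\\
\psi_2(u,v,w)&=u\big(\alpha v+(1-\alpha)u+\beta w\big),
\end{align*}
so that the hypotheses (i) and (ii) read $\psi_1(Q_{ST},Q_{CV},Q_{SD})\in\Omega$ and $\psi_2(Q_{ST},Q_{CV},Q_{SD})\in\Omega$ respectively. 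By Theorem \ref{sdlemma} it then suffices to check $\psi_1,\psi_2\in\Psi(\Omega)$.

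For the substitution, fix real $\rho,\tau,\xi,\eta$ with $\rho\tau\geqslant(1+3\rho^2)/2$ and $\rho\eta\geqslant0$. A short computation gives
\begin{align*}
\operatorname{Re}\psi_1(i\rho,i\tau,\xi+i\eta)&=-\alpha\rho\tau-(1-\alpha)\tau^2-\beta\tau\eta,\\
\operatorname{Re}\psi_2(i\rho,i\tau,\xi+i\eta)&=-\alpha\rho\tau-(1-\alpha)\rho^2-\beta\rho\eta.
\end{align*}
In both expressions the first term is bounded above by $-\tfrac{\alpha}{2}(1+3\rho^2)\leqslant-\tfrac{\alpha}{2}$ using $\rho\tau\geqslant(1+3\rho^2)/2$ and $\alpha\geqslant0$, while the second term is $\leqslant0$ because $0\leqslant\alpha\leqslant1$. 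For $\psi_2$ the last term is $-\beta\rho\eta\leqslant0$ directly from $\beta\geqslant0$ and $\rho\eta\geqslant0$, so $\operatorname{Re}\psi_2(i\rho,i\tau,\xi+i\eta)\leqslant-\alpha/2$ and $\psi_2\in\Psi(\Omega)$.

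The one point that needs a word of justification — the main (mild) obstacle — is the term $-\beta\tau\eta$ appearing in $\operatorname{Re}\psi_1$ instead of $-\beta\rho\eta$. Here I would observe that $\rho\tau\geqslant(1+3\rho^2)/2>0$ forces $\rho$ and $\tau$ to be nonzero of the same sign; combined with $\rho\eta\geqslant0$ this yields $\tau\eta\geqslant0$ (if $\eta=0$ the claim is trivial, otherwise $\eta$ has the sign of $\rho$, hence of $\tau$). Thus $-\beta\tau\eta\leqslant0$, giving $\operatorname{Re}\psi_1(i\rho,i\tau,\xi+i\eta)\leqslant-\alpha/2$ and $\psi_1\in\Psi(\Omega)$. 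With both admissibility checks complete, Theorem \ref{sdlemma} shows $f$ is starlike in either case, which finishes the proof. \qed
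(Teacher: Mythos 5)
Your proposal is correct and follows essentially the same admissibility argument as the paper, with the same $\Omega$ and the same two test functions (the paper's displayed $\psi_1$ contains a typo, writing $u(\cdots)$ where $v(\cdots)$ is meant, but its subsequent computation agrees with yours). Your explicit justification that $\rho\tau\geqslant(1+3\rho^2)/2>0$ and $\rho\eta\geqslant0$ together force $\tau\eta\geqslant0$ is a detail the paper uses silently, and it is worth keeping.
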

\begin{proof} 
	Let $\Omega$ be defined by $\Omega:=\{w\in \mathbb{C}: \operatorname{Re}w>-\alpha/2\}$ and for $i=1,2$, let the functions $\psi_i:\mathbb{C}^3\to\mathbb{C}$ be defined  by 
	\begin{alignat*}{3}
	\psi_1(u,v,w)=&u(\alpha u+(1-\alpha)v+\beta w) 
	\shortintertext{and}
	\psi_2(u,v,w)=&u(\alpha v+(1-\alpha)u+\beta w).
	\end{alignat*}
	The hypothesis of the theorem shows that the function $\psi_i$ satisfies  
	\[\psi_i(Q_{ST},Q_{CV},Q_{SD})\in \Omega\quad \text{for\ } i=1,2.\] 
	It then follows from Theorem \ref{sdlemma}  that the function $f$ is starlike provided $\psi_i\in \Psi(\Omega)$.
	We complete the proof by showing  that the function $\psi_i\in \Psi(\Omega)$.
	
	Let $0\leqslant\alpha\leqslant1$ and $\beta \geqslant 0$ and  $\rho,\tau,\xi,\eta \in \mathbb{R}$ satisfy the conditions $\rho\tau\geqslant(1+3\rho^2)/2$ and $\rho\eta\geqslant0$. Then,  we have
	\begin{align*}
	\operatorname{Re}\psi_1(i\rho,i\tau,\xi+i\eta)=-\alpha\rho\tau-(1-\alpha)\tau^2-\beta \tau\eta
	\leqslant -\frac{\alpha}{2}(1+3\rho^2)\leqslant-\frac{\alpha}{2},
	\end{align*}
	and this proves that  the function $\psi_1\in \Psi(\Omega)$. For the function $\psi_2$, we have
	\begin{align*}
	\operatorname{Re}\psi_2(i\rho,i\tau,\xi+i\eta)=-(1-\alpha)\rho^2-\alpha\rho\tau-\beta\tau\eta
	\leqslant -\frac{\alpha}{2}(1+3\rho^2)\leqslant-\frac{\alpha}{2},
	\end{align*}and so  the function $\psi_2 \in \Psi(\Omega_2)$.	
\end{proof}
\begin{remark}
	Let $\alpha=1$ and $\beta=0$, then the results obtained from both Part(i) and Part(ii) of Theorem \ref{sdtheorem8} is same as the sufficent condition for starlikeness obtained by Ramesha and Padmanabhan \cite{MR1340763}.
\end{remark}
\begin{theorem}\label{sdtheorem9}
	Let $\alpha> 0$ and $\beta \geqslant 0$. If the function $f \in \mathcal{A}$ satisfy any of the following inequalities
	\begin{enumerate}
		\item[(i)] $\operatorname{Re}\big(\alpha(Q_{CV}/Q_{ST})+\beta(Q_{SD}/Q_{ST})\big)<3\alpha/2$ ,
		\item[(ii)] $\operatorname{Re}\big(\alpha(Q_{CV}/Q_{ST})+\beta(Q_{SD}/Q_{CV})\big)<3\alpha/2$,	
	\end{enumerate}
	then the function $f$ is starlike.
\end{theorem}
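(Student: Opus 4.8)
The plan is to reuse the template of the earlier proofs, taking $\Omega := \{w \in \mathbb{C} : \operatorname{Re} w < 3\alpha/2\}$ --- note this is now a \emph{left} half-plane --- and defining $\psi_1, \psi_2 : \mathbb{C}^3 \to \mathbb{C}$ by
\[
\psi_1(u,v,w) = \alpha\frac{v}{u} + \beta\frac{w}{u}, \qquad \psi_2(u,v,w) = \alpha\frac{v}{u} + \beta\frac{w}{v}.
\]
Because $f(z)f'(z)/z \neq 0$ one has $Q_{ST} \neq 0$, and in case (ii) the very form of the hypothesis presupposes $Q_{CV} \neq 0$; hence $\psi_i(Q_{ST}, Q_{CV}, Q_{SD})$ makes sense, and hypotheses (i) and (ii) say exactly that $\psi_i(Q_{ST}, Q_{CV}, Q_{SD}) \in \Omega$ for $i = 1, 2$ respectively. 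By Theorem \ref{sdlemma} it therefore suffices to check $\psi_1, \psi_2 \in \Psi(\Omega)$, which, since $\Omega$ is the left half-plane, amounts to proving $\operatorname{Re}\psi_i(i\rho, i\tau, \xi + i\eta) \geq 3\alpha/2$ for all real $\rho, \tau, \xi, \eta$ satisfying \eqref{sdlemmacond}.

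Next I would exploit \eqref{sdlemmacond} as follows. The inequality $\rho\tau \geq (1 + 3\rho^2)/2 > 0$ forces $\rho \neq 0$ and $\tau \neq 0$; dividing it by $\rho^2 > 0$ yields $\tau/\rho \geq (1 + 3\rho^2)/(2\rho^2) \geq 3/2$, while $\rho\eta \geq 0$ yields $\eta/\rho = \rho\eta/\rho^2 \geq 0$ and also $\eta/\tau = \tau\eta/\tau^2 \geq 0$, the last because $\tau\eta = (\rho\tau)(\rho\eta)/\rho^2$ is a product of nonnegative quantities. A direct substitution $u = i\rho$, $v = i\tau$, $w = \xi + i\eta$ (using $i\tau/i\rho = \tau/\rho$ and $(\xi + i\eta)/i\rho = \eta/\rho - i\xi/\rho$, and similarly with $\tau$ in place of $\rho$) then gives
\[
\operatorname{Re}\psi_1(i\rho, i\tau, \xi + i\eta) = \alpha\frac{\tau}{\rho} + \beta\frac{\eta}{\rho}, \qquad \operatorname{Re}\psi_2(i\rho, i\tau, \xi + i\eta) = \alpha\frac{\tau}{\rho} + \beta\frac{\eta}{\tau},
\]
and since $\alpha > 0$, $\beta \geq 0$, $\tau/\rho \geq 3/2$ and $\eta/\rho, \eta/\tau \geq 0$, both right-hand sides are $\geq 3\alpha/2$. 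Hence $\psi_i(i\rho, i\tau, \xi + i\eta) \notin \Omega$, so $\psi_1, \psi_2 \in \Psi(\Omega)$ and the theorem follows.

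The only delicate point --- the part I would be most careful with --- is the bookkeeping around the divisions: one must first observe that \eqref{sdlemmacond} automatically rules out $\rho = 0$ and $\tau = 0$, so that $\tau/\rho$, $\eta/\rho$, $\eta/\tau$ are all legitimate; one must get the signs of $\eta/\rho$ and $\eta/\tau$ right; and one must remember that, the target set $\Omega$ being $\{\operatorname{Re} w < 3\alpha/2\}$ rather than a right half-plane, the admissibility condition \eqref{sdlemma1} translates into the \emph{reversed} inequality $\operatorname{Re}\psi_i(i\rho,i\tau,\xi+i\eta) \geq 3\alpha/2$. Beyond that the argument is the same routine verification carried out in the previous theorems.
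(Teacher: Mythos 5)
Your proposal is correct and follows essentially the same route as the paper: the same $\Omega=\{w:\operatorname{Re}w<3\alpha/2\}$, the same admissible functions $\psi_1(u,v,w)=\alpha v/u+\beta w/u$ and $\psi_2(u,v,w)=\alpha v/u+\beta w/v$, and the same bound $\alpha\tau/\rho\geqslant\alpha(1+3\rho^2)/(2\rho^2)\geqslant 3\alpha/2$ together with $\eta/\rho,\eta/\tau\geqslant 0$. Your extra bookkeeping (that \eqref{sdlemmacond} forces $\rho,\tau\neq 0$ and that $\tau\eta\geqslant 0$) is sound and in fact slightly more careful than the paper's own verification.
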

\begin{proof}
	Let $\Omega$ be defined by $\Omega:=\{w\in \mathbb{C}: \operatorname{Re}w<3\alpha/2\}$ and for $i=1,2$, let the functions $\psi_i:\mathbb{C}^3\to\mathbb{C}$ be defined  by 
	\begin{align*}
	\psi_1(u,v,w)=\alpha(v/u)+\beta(w/u) \quad 
	\text{and}\quad
	\psi_2(u,v,w)=\alpha(v/u)+\beta(w/v).
	\end{align*}
	The hypothesis of the theorem shows that the function $\psi_i$ satisfies  
	\[\psi_i(Q_{ST},Q_{CV},Q_{SD})\in \Omega\quad \text{for\ } i=1,2.\] 
	It then follows from Theorem \ref{sdlemma}  that the function $f$ is starlike provided $\psi_i\in \Psi(\Omega)$.
	We complete the proof by showing  that the function $\psi_i\in \Psi(\Omega)$.
	
	Let $\alpha> 0$ and $\beta \geqslant 0$ and  $\rho,\tau,\xi,\eta \in \mathbb{R}$ satisfy the conditions $\rho\tau\geqslant(1+3\rho^2)/2$ and $\rho\eta\geqslant0$. Then,  we have
	\begin{align*}
	\operatorname{Re}\psi_1(i\rho,i\tau,\xi+i\eta)=\frac{\alpha\tau}{\rho}+\frac{\beta\eta}{\rho}
	\geqslant \frac{\alpha}{2}\frac{(1+3\rho^2)}{\rho^2}\geqslant\frac{3\alpha}{2},
	\end{align*}
	and this proves that  the function $\psi_1\in \Psi(\Omega)$. For the function $\psi_2$, we have
	\begin{align*}
	\operatorname{Re}\psi_2(i\rho,i\tau,\xi+i\eta)=\frac{\alpha\tau}{\rho}+\frac{\beta\eta}{\tau}
	\geqslant \frac{\alpha}{2}\frac{(1+3\rho^2)}{\rho^2}\geqslant\frac{3\alpha}{2},
	\end{align*}and so  the function $\psi_2 \in \Psi(\Omega_2)$.
\end{proof}
\begin{remark}
	The sufficient conditions for starlikeness obtained by Tuneski \cite[pp.523]{MR1761956} follows from Part(i) and Part(ii) of  Theorem \ref{sdtheorem9} when $\alpha=1$ and $\beta=0$.
\end{remark}
\begin{theorem}
	Let $\alpha> 0$ and $\beta \leqslant 0$. If the function $f \in \mathcal{A}$ satisfy any of the following inequalities
	\begin{enumerate}
		\item[(i)] $\operatorname{Re}\big(\alpha(Q_{ST}/Q_{CV})+\beta(Q_{SD}/Q_{CV})\big)>2\alpha/3$ ,
		\item[(ii)] $\operatorname{Re}\big(\alpha(Q_{ST}/Q_{CV})+\beta(Q_{SD}/Q_{ST})\big)>2\alpha/3$,	
	\end{enumerate}
	then the function $f$ is starlike.
\end{theorem}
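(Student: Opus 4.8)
The plan is to invoke Theorem~\ref{sdlemma} once more. Put $\Omega := \{w \in \mathbb{C} : \operatorname{Re} w > 2\alpha/3\}$ and define $\psi_1,\psi_2 \colon \mathbb{C}^3 \to \mathbb{C}$ by
\[
\psi_1(u,v,w) = \alpha\frac{u}{v} + \beta\frac{w}{v}, \qquad \psi_2(u,v,w) = \alpha\frac{u}{v} + \beta\frac{w}{u}.
\]
With $u = Q_{ST}$, $v = Q_{CV}$, $w = Q_{SD}$ (the ratios being meaningful since $f(z)f'(z)/z \neq 0$, so that $Q_{ST}$, and implicitly $Q_{CV}$, are nonzero), hypothesis (i) is exactly $\psi_1(Q_{ST},Q_{CV},Q_{SD}) \in \Omega$ and hypothesis (ii) is $\psi_2(Q_{ST},Q_{CV},Q_{SD}) \in \Omega$. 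Hence, by Theorem~\ref{sdlemma}, it suffices to prove that $\psi_1 \in \Psi(\Omega)$ (for (i)) and $\psi_2 \in \Psi(\Omega)$ (for (ii)), i.e.\ that $\operatorname{Re}\psi_j(i\rho,i\tau,\xi+i\eta) \leqslant 2\alpha/3$ whenever $\rho\tau \geqslant (1+3\rho^2)/2$ and $\rho\eta \geqslant 0$.

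From the admissibility constraints I would first extract that $\rho \neq 0$ (else $0 \geqslant 1/2$), that $\rho$ and $\tau$ have the same sign, and that
\[
\frac{\tau}{\rho} \geqslant \frac{1+3\rho^2}{2\rho^2} = \frac{1}{2\rho^2} + \frac{3}{2} \geqslant \frac{3}{2}, \qquad \text{so}\qquad 0 < \frac{\rho}{\tau} \leqslant \frac{2}{3};
\]
moreover $\eta/\rho \geqslant 0$ and $\eta/\tau \geqslant 0$, since $\rho\eta \geqslant 0$ and $\tau$ has the sign of $\rho$. A short computation, using $(\xi+i\eta)/(i\tau) = \eta/\tau - i\xi/\tau$ (and likewise with $\rho$ in place of $\tau$) and $i\rho/(i\tau)=\rho/\tau$, then gives
\[
\operatorname{Re}\psi_1(i\rho,i\tau,\xi+i\eta) = \frac{\alpha\rho}{\tau} + \frac{\beta\eta}{\tau}, \qquad \operatorname{Re}\psi_2(i\rho,i\tau,\xi+i\eta) = \frac{\alpha\rho}{\tau} + \frac{\beta\eta}{\rho}.
\]
Since $\alpha > 0$ and $0 < \rho/\tau \leqslant 2/3$, each leading term is at most $2\alpha/3$; since $\beta \leqslant 0$ and $\eta/\tau,\eta/\rho \geqslant 0$, the remaining term is nonpositive in either case. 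Therefore $\operatorname{Re}\psi_j(i\rho,i\tau,\xi+i\eta) \leqslant 2\alpha/3$ for $j=1,2$, so $\psi_1,\psi_2 \in \Psi(\Omega)$ and $f$ is starlike.

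There is no genuine analytic obstacle here: the argument reduces to the elementary inequality $\rho/\tau \leqslant 2/3$ coming from $\rho\tau \geqslant (1+3\rho^2)/2$ — which is exactly what fixes the threshold $2\alpha/3$ — together with keeping track of signs, and it is this sign bookkeeping that forces the hypothesis $\beta \leqslant 0$ (rather than $\beta \geqslant 0$, as in most of the preceding theorems). The one point worth flagging is the tacit assumption that $Q_{CV}$ (and $Q_{ST}$) do not vanish, so that $\psi_1$ and $\psi_2$ are evaluated only where they are defined.
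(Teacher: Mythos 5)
Your proof is correct and follows essentially the same route as the paper: the same $\Omega$, the same admissible functions $\psi_1,\psi_2$, and the same key bound $\rho/\tau\leqslant 2\rho^2/(1+3\rho^2)\leqslant 2/3$ combined with dropping the nonpositive $\beta$-term. If anything, you are more explicit than the paper about the sign bookkeeping ($\rho\neq 0$, $\rho$ and $\tau$ of the same sign, $\eta/\rho,\eta/\tau\geqslant 0$) and about the tacit nonvanishing of $Q_{CV}$, both of which the paper leaves implicit.
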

\begin{proof}
	Let $\Omega$ be defined by $\Omega:=\{w\in \mathbb{C}: \operatorname{Re}w>2\alpha/3\}$ and for $i=1,2$, let the functions $\psi_i:\mathbb{C}^3\to\mathbb{C}$ be defined  by 
	\begin{align*}
	\psi_1(u,v,w)=\alpha (u/v)+\beta (w/v) \quad 
	\text{and}\quad
	\psi_2(u,v,w)=\alpha(u/v)+\beta(w/u).
	\end{align*}
	The hypothesis of the theorem shows that the function $\psi_i$ satisfies  
	\[\psi_i(Q_{ST},Q_{CV},Q_{SD})\in \Omega\quad \text{for\ } i=1,2.\] 
	It then follows from Theorem \ref{sdlemma}  that the function $f$ is starlike provided $\psi_i\in \Psi(\Omega)$.
	We complete the proof by showing  that the function $\psi_i\in \Psi(\Omega)$.
	
	Let $\alpha> 0$ and $\beta \leqslant 0$ and  $\rho,\tau,\xi,\eta \in \mathbb{R}$ satisfy the conditions $\rho\tau\geqslant(1+3\rho^2)/2$ and $\rho\eta\geqslant0$. Then,  we have
	\begin{align*}
	\operatorname{Re}\psi_1(i\rho,i\tau,\xi+i\eta)=\frac{\alpha\rho}{\tau}+ \frac{\beta\eta}{\tau}\leqslant \frac{2\alpha\rho^2}{(1+3\rho^2)}\leqslant\frac{2\alpha}{3},
	\end{align*}
	and this proves that  the function $\psi_1\in \Psi(\Omega)$. For the function $\psi_2$, we have
	\begin{align*}
	\operatorname{Re}\psi_2(i\rho,i\tau,\xi+i\eta)=\frac{\alpha\rho}{\tau}+\frac{\beta\eta}{\rho}\leqslant\frac{2\alpha\rho^2}{(1+3\rho^2)}\leqslant\frac{2\alpha}{3},
	\end{align*}and so  the function $\psi_2 \in \Psi(\Omega_2)$.
\end{proof}
\begin{theorem}
	Let $\alpha < 0$ and $\beta \in \mathbb{R}$. If the function $f \in \mathcal{A}$ satisfy any of the following inequalities
	\begin{enumerate}
		\item[(i)] $\operatorname{Re}\big(\alpha(Q_{SD}/Q_{ST})+\beta Q_{ST}\big)>0$ ,
		\item[(ii)] $\operatorname{Re}\big(\alpha(Q_{SD}/Q_{CV})+\beta Q_{ST}\big)>0$ ,	
		\item[(iii)] $\operatorname{Re}\big(\alpha(Q_{SD}/Q_{ST})+\beta Q_{CV}\big)>0$ ,
		\item[(iv)] $\operatorname{Re}\big(\alpha(Q_{SD}/Q_{CV})+\beta Q_{CV}\big)>0$ ,
	\end{enumerate}
	then the function $f$ is starlike.
\end{theorem}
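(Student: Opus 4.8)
The plan is to apply Theorem~\ref{sdlemma} with the half-plane $\Omega:=\{w\in\mathbb{C}:\operatorname{Re}w>0\}$, in the same fashion as the preceding theorems. For parts $(i)$--$(iv)$ I would introduce the candidate functions $\psi_i:\mathbb{C}^3\to\mathbb{C}$ defined by
\begin{align*}
\psi_1(u,v,w)&=\alpha(w/u)+\beta u, & \psi_2(u,v,w)&=\alpha(w/v)+\beta u,\\
\psi_3(u,v,w)&=\alpha(w/u)+\beta v, & \psi_4(u,v,w)&=\alpha(w/v)+\beta v,
\end{align*}
so that the hypothesis in each part is exactly $\psi_i(Q_{ST},Q_{CV},Q_{SD})\in\Omega$. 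Theorem~\ref{sdlemma} then reduces the proof to verifying $\psi_i\in\Psi(\Omega)$, that is, that $\operatorname{Re}\psi_i(i\rho,i\tau,\xi+i\eta)\leqslant0$ for all real $\rho,\tau,\xi,\eta$ satisfying \eqref{sdlemmacond}.

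The verification is essentially one line. Using $(\xi+i\eta)/(i\rho)=\eta/\rho-i\xi/\rho$ and noting that the summand $\beta(i\rho)$ (respectively $\beta(i\tau)$) is purely imaginary, one obtains $\operatorname{Re}\psi_1(i\rho,i\tau,\xi+i\eta)=\operatorname{Re}\psi_3(i\rho,i\tau,\xi+i\eta)=\alpha\eta/\rho$ and $\operatorname{Re}\psi_2(i\rho,i\tau,\xi+i\eta)=\operatorname{Re}\psi_4(i\rho,i\tau,\xi+i\eta)=\alpha\eta/\tau$. The first inequality in \eqref{sdlemmacond} forces $\rho\neq0$ and $\rho\tau>0$, so that $\rho$, $\tau$, and (by $\rho\eta\geqslant0$) $\eta$ all have compatible signs; hence $\eta/\rho=\rho\eta/\rho^2\geqslant0$ and $\eta/\tau=\rho\eta/(\rho\tau)\geqslant0$. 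Since $\alpha<0$, each of the four real parts is $\leqslant0$, giving $\psi_i\in\Psi(\Omega)$ for $i=1,2,3,4$, and the starlikeness of $f$ follows from Theorem~\ref{sdlemma}.

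I expect no genuine difficulty here; the one feature worth highlighting is that the sign of $\beta$ is immaterial---which is why the hypothesis only requires $\beta\in\mathbb{R}$ rather than $\beta\geqslant0$ as in the earlier results---because at the admissibility test points $u=i\rho$, $v=i\tau$ the terms $\beta Q_{ST}$ and $\beta Q_{CV}$ contribute nothing to the real part, and all the negativity needed is supplied by the term $\alpha(Q_{SD}/Q_{ST})$ or $\alpha(Q_{SD}/Q_{CV})$ through the constraint $\rho\eta\geqslant0$. As in Theorem~\ref{sdtheorem9}, the quotients in $(i)$--$(iv)$ are understood to presuppose that $Q_{ST}$ (parts $(i)$, $(iii)$) or $Q_{CV}$ (parts $(ii)$, $(iv)$) is zero-free on $\mathbb{D}$, which together with the standing assumption $f(z)f'(z)/z\neq0$ ensures that the quotients $Q_{ST}$, $Q_{CV}$, $Q_{SD}$ and the displayed combinations are well-defined analytic functions.
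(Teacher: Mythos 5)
Your proposal is correct and follows essentially the same route as the paper: the same half-plane $\Omega=\{w:\operatorname{Re}w>0\}$, the same admissible functions $\psi_i$, and the same computation of $\operatorname{Re}\psi_i(i\rho,i\tau,\xi+i\eta)$ as $\alpha\eta/\rho$ or $\alpha\eta/\tau$. You even supply a detail the paper leaves implicit, namely that $\rho\tau\geqslant(1+3\rho^2)/2>0$ forces $\rho$ and $\tau$ to share a sign so that $\eta/\tau\geqslant0$ as well as $\eta/\rho\geqslant0$.
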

\begin{proof}
	Let $\Omega$ be defined by $\Omega:=\{w\in \mathbb{C}: \operatorname{Re}w>0\}$ and for $i=1,2,3,4$, let the functions $\psi_i:\mathbb{C}^3\to\mathbb{C}$ be defined  by 
	\begin{align*}
	\psi_1(u,v,w)=&\alpha(w/u)+\beta u, \quad 
	\psi_2(u,v,w)=\alpha (w/v)+\beta u,\\
	\psi_3(u,v,w)=&\alpha (w/u)+\beta v\quad 
	\text{and}\quad 
	\psi_4(v,w)=\alpha (w/v)+\beta v.
	\end{align*}
	The hypothesis of the theorem shows that the function $\psi_i$ satisfies  
	\[\psi_i(Q_{ST},Q_{CV},Q_{SD})\in \Omega\quad \text{for\ } i=1,2,3,4.\] 
	It then follows from Theorem \ref{sdlemma}  that the function $f$ is starlike provided $\psi_i\in \Psi(\Omega)$.
	We complete the proof by showing  that the function $\psi_i\in \Psi(\Omega)$.
	
	Let $\alpha < 0$ and $\beta \in \mathbb{R}$ and  $\rho,\tau,\xi,\eta \in \mathbb{R}$ satisfy the conditions $\rho\tau\geqslant(1+3\rho^2)/2$ and $\rho\eta\geqslant0$. Then,  we have
	\begin{align*}
	\operatorname{Re}\psi_1(i\rho,i\tau,\xi+i\eta)=\frac{\alpha\eta}{\rho}\leqslant 0,
	\end{align*}
	and this proves that  the function $\psi_1\in \Psi(\Omega)$. For the function $\psi_2$, we have
	\begin{align*}
	\operatorname{Re}\psi_2(i\rho,i\tau,\xi+i\eta)=\frac{\alpha\eta}{\tau}
	\leqslant 0,
	\end{align*}and so  the function $\psi_2 \in \Psi(\Omega_2)$.
	Similarly, we have 
	\begin{align*}
	\operatorname{Re}\psi_3(i\rho,i\tau,\xi+i\eta)=\frac{\alpha\eta}{\rho}\leqslant 0,
	\shortintertext{and }
	\operatorname{Re}\psi_4(i\rho,i\tau, \xi+i\eta)=\frac{\alpha\eta}{\tau} \leqslant 0,
	\end{align*} so that  the functions $\psi_3\in \Psi(\Omega)$  and  $\psi_4   \in \Psi(\Omega)$.
	
\end{proof}
Many authors have dedicated significant part of their works on developing conditions for the functions to be starlike. In such a way, the quotients $Q_{CV}/Q_{ST}$, $(Q_{CV}-\gamma)/Q_{ST}$ were introduced and examined. See \cite{MR1684341,MR2097775, MR1761956, MR2675019}. Along with the quotients, we consider Schwarzian derivatives and discuss its consequences in the study of starlikeness of functions.

\begin{theorem}
	Let $\alpha \leqslant0$ and $\beta \geqslant0$. If the function $f \in \mathcal{A}$ satisfy any of the following inequalities
	\begin{enumerate}
		\item[(i)] $\operatorname{Re}\big(\alpha(Q_{SD}/Q_{ST})+Q_{ST}(1+\beta Q_{ST})\big)>0$ ,
		\item[(ii)] $\operatorname{Re}\big(\alpha(Q_{SD}/Q_{CV})+Q_{ST}(1+\beta Q_{ST})\big)>0$,	
		\item[(iii)] $\operatorname{Re}\big(\alpha(Q_{SD}/Q_{CV})+Q_{CV}(1+\beta Q_{CV})\big)>0$ ,
		\item[(iv)] $\operatorname{Re}\big(\alpha(Q_{SD}/Q_{ST})+Q_{CV}(1+\beta Q_{CV})\big)>0$ ,
	\end{enumerate}
	then the function $f$ is starlike.
\end{theorem}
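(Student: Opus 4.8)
The plan is to run the same admissibility scheme used in the preceding theorems and appeal to Theorem~\ref{sdlemma}. First I would set $\Omega := \{w \in \mathbb{C} : \operatorname{Re} w > 0\}$ and introduce the four functions $\psi_i : \mathbb{C}^3 \to \mathbb{C}$ that encode the left-hand sides of (i)--(iv), namely
\[
\psi_1(u,v,w) = \alpha\frac{w}{u} + u(1+\beta u), \qquad \psi_2(u,v,w) = \alpha\frac{w}{v} + u(1+\beta u),
\]
\[
\psi_3(u,v,w) = \alpha\frac{w}{v} + v(1+\beta v), \qquad \psi_4(u,v,w) = \alpha\frac{w}{u} + v(1+\beta v).
\]
With these choices the hypotheses (i)--(iv) say precisely that $\psi_i(Q_{ST},Q_{CV},Q_{SD}) \in \Omega$ for $i=1,2,3,4$ respectively, so by Theorem~\ref{sdlemma} it suffices to prove $\psi_i \in \Psi(\Omega)$, that is, to verify the admissibility condition \eqref{sdlemma1} subject to \eqref{sdlemmacond}.

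Next I would carry out the purely imaginary substitution. Fix $\rho,\tau,\xi,\eta \in \mathbb{R}$ with $\rho\tau \geq (1+3\rho^2)/2$ and $\rho\eta \geq 0$. Since $(1+3\rho^2)/2 > 0$, the first relation forces $\rho \neq 0$ and $\tau \neq 0$ with $\rho$ and $\tau$ of the same sign; together with $\rho\eta \geq 0$ this gives $\eta/\rho \geq 0$ and $\eta/\tau \geq 0$. Using $1/i = -i$, a short computation yields
\[
\operatorname{Re}\psi_1(i\rho,i\tau,\xi+i\eta) = \frac{\alpha\eta}{\rho} - \beta\rho^2, \qquad \operatorname{Re}\psi_2(i\rho,i\tau,\xi+i\eta) = \frac{\alpha\eta}{\tau} - \beta\rho^2,
\]
\[
\operatorname{Re}\psi_3(i\rho,i\tau,\xi+i\eta) = \frac{\alpha\eta}{\tau} - \beta\tau^2, \qquad \operatorname{Re}\psi_4(i\rho,i\tau,\xi+i\eta) = \frac{\alpha\eta}{\rho} - \beta\tau^2.
\]
In each identity the first term is $\leq 0$ because $\alpha \leq 0$ while the corresponding ratio $\eta/\rho$ or $\eta/\tau$ is nonnegative, and the second term is $\leq 0$ because $\beta \geq 0$. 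Hence $\operatorname{Re}\psi_i(i\rho,i\tau,\xi+i\eta) \leq 0$, so $\psi_i(i\rho,i\tau,\xi+i\eta) \notin \Omega$; thus $\psi_i \in \Psi(\Omega)$ and the conclusion follows from Theorem~\ref{sdlemma}.

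The one step that departs from the earlier product-type theorems, and the only place that needs any care, is the presence of the quotients $Q_{SD}/Q_{ST}$ and $Q_{SD}/Q_{CV}$: evaluating $\psi_i$ at $(i\rho,i\tau)$ divides by $i\rho$ or $i\tau$, so one must record in advance that $\rho \neq 0$ and $\tau \neq 0$, both of which are automatic from \eqref{sdlemmacond}. The small observation that $\rho$ and $\tau$ necessarily have the same sign --- which is what upgrades $\rho\eta \geq 0$ to $\eta/\tau \geq 0$ in the cases $\psi_2,\psi_3$ that divide by $v$ --- is the crux of the argument; once it is in place, all four estimates reduce to immediate sign checks.
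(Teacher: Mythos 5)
Your proposal is correct and follows essentially the same route as the paper: the same $\Omega$, the same four admissible functions $\psi_i$, and the same real-part computations $\alpha\eta/\rho-\beta\rho^2$, $\alpha\eta/\tau-\beta\rho^2$, $\alpha\eta/\tau-\beta\tau^2$, $\alpha\eta/\rho-\beta\tau^2$. Your explicit remark that $\rho\tau\geqslant(1+3\rho^2)/2>0$ forces $\rho,\tau\neq0$ with equal signs (so that $\eta/\tau\geqslant0$ as well as $\eta/\rho\geqslant0$) is a point the paper leaves implicit, and it is exactly the justification needed.
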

\begin{proof}
	Let $\Omega$ be defined by $\Omega:=\{w\in \mathbb{C}: \operatorname{Re}w>0\}$ and for $i=1,2,3,4$, let the functions $\psi_i:\mathbb{C}^3\to\mathbb{C}$ be defined  by 
	\begin{align*}
	\psi_1(u,v,w)=&\alpha(w/u)+u(1+\beta u), \quad 
	\psi_2(u,v,w)=\alpha (w/v)+u(1+\beta u),\\
	\psi_3(u,v,w)=&\alpha (w/v)+v(1+\beta v)\quad 
	\text{and}\quad 
	\psi_4(v,w)=\alpha (w/u)+v(1+\beta v).
	\end{align*}
	The hypothesis of the theorem shows that the function $\psi_i$ satisfies  
	\[\psi_i(Q_{ST},Q_{CV},Q_{SD})\in \Omega\quad \text{for\ } i=1,2,3,4.\] 
	It then follows from Theorem \ref{sdlemma}  that the function $f$ is starlike provided $\psi_i\in \Psi(\Omega)$.
	We complete the proof by showing  that the function $\psi_i\in \Psi(\Omega)$.
	
	Let $\alpha \leqslant0$ and $\beta \geqslant0$ and  $\rho,\tau,\xi,\eta \in \mathbb{R}$ satisfy the conditions $\rho\tau\geqslant(1+3\rho^2)/2$ and $\rho\eta\geqslant0$. Then,  we have
	\begin{align*}
	\operatorname{Re}\psi_1(i\rho,i\tau,\xi+i\eta)=\frac{\alpha\eta}{\rho}-\beta\rho^2\leqslant 0,
	\end{align*}
	and this proves that  the function $\psi_1\in \Psi(\Omega)$. For the function $\psi_2$, we have
	\begin{align*}
	\operatorname{Re}\psi_2(i\rho,i\tau,\xi+i\eta)=\frac{\alpha\eta}{\tau}-\beta\rho^2
	\leqslant 0,
	\end{align*}and so  the function $\psi_2 \in \Psi(\Omega_2)$.
	Similarly, we have 
	\begin{align*}
	\operatorname{Re}\psi_3(i\rho,i\tau,\xi+i\eta)=\frac{\alpha\eta}{\tau}-\beta\tau^2\leqslant 0,
	\shortintertext{and }
	\operatorname{Re}\psi_4(i\rho,i\tau, \xi+i\eta)=\frac{\alpha\eta}{\rho}-\beta\tau^2 \leqslant 0,
	\end{align*} so that  the functions $\psi_3\in \Psi(\Omega)$  and  $\psi_4   \in \Psi(\Omega)$.

\end{proof}
\begin{theorem}
	Let $\alpha \leqslant0$ and $\beta \geqslant0$. If the function $f \in \mathcal{A}$ satisfy any of the following inequalities
	\begin{enumerate}
		\item[(i)] $\operatorname{Re}\big(\alpha(Q_{SD}/Q_{ST})+Q_{ST}(1+\beta Q_{CV})\big)>-\beta/2$ ,
		\item[(ii)] $\operatorname{Re}\big(\alpha(Q_{SD}/Q_{ST})+Q_{CV}(1+\beta Q_{ST})\big)>-\beta/2$,	
		\item[(iii)] $\operatorname{Re}\big(\alpha(Q_{SD}/Q_{CV})+Q_{CV}(1+\beta Q_{ST})\big)>-\beta/2$ ,
		\item[(iv)] $\operatorname{Re}\big(\alpha(Q_{SD}/Q_{CV})+Q_{ST}(1+\beta Q_{CV})\big)>-\beta/2$ ,
	\end{enumerate}
	then the function $f$ is starlike.
\end{theorem}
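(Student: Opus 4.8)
The plan is to apply Theorem~\ref{sdlemma} in exactly the manner of the preceding theorems. First I would set $\Omega := \{w \in \mathbb{C} : \operatorname{Re} w > -\beta/2\}$ and introduce the four candidate functions $\psi_i : \mathbb{C}^3 \to \mathbb{C}$ given by
\[
\psi_1(u,v,w) = \alpha\frac{w}{u} + u(1+\beta v), \qquad \psi_2(u,v,w) = \alpha\frac{w}{u} + v(1+\beta u),
\]
\[
\psi_3(u,v,w) = \alpha\frac{w}{v} + v(1+\beta u), \qquad \psi_4(u,v,w) = \alpha\frac{w}{v} + u(1+\beta v).
\]
Since $f(z)f'(z)/z \neq 0$, the quotients $Q_{SD}/Q_{ST}$ and $Q_{SD}/Q_{CV}$ are well defined, and each of the hypotheses (i)--(iv) is precisely the assertion $\psi_i(Q_{ST}, Q_{CV}, Q_{SD}) \in \Omega$. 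By Theorem~\ref{sdlemma} it therefore suffices to show $\psi_i \in \Psi(\Omega)$ for $i = 1,2,3,4$.

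Next I would verify the admissibility condition \eqref{sdlemma1}--\eqref{sdlemmacond}. Putting $u = i\rho$, $v = i\tau$, $w = \xi+i\eta$ and using $1/i = -i$, a direct computation gives
\[
\operatorname{Re}\psi_1(i\rho,i\tau,\xi+i\eta) = \frac{\alpha\eta}{\rho} - \beta\rho\tau = \operatorname{Re}\psi_2(i\rho,i\tau,\xi+i\eta)
\]
and
\[
\operatorname{Re}\psi_3(i\rho,i\tau,\xi+i\eta) = \frac{\alpha\eta}{\tau} - \beta\rho\tau = \operatorname{Re}\psi_4(i\rho,i\tau,\xi+i\eta).
\]
The remainder is a sign chase. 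The constraint $\rho\tau \geqslant (1+3\rho^2)/2 > 0$ forces $\rho \neq 0$ and makes $\rho$ and $\tau$ have the same sign, so together with $\rho\eta \geqslant 0$ we obtain $\eta/\rho \geqslant 0$ and $\eta/\tau \geqslant 0$; since $\alpha \leqslant 0$, this gives $\alpha\eta/\rho \leqslant 0$ and $\alpha\eta/\tau \leqslant 0$. Because $\beta \geqslant 0$ and $\rho\tau \geqslant (1+3\rho^2)/2$, we also have $-\beta\rho\tau \leqslant -\beta(1+3\rho^2)/2 \leqslant -\beta/2$. Combining these, $\operatorname{Re}\psi_i(i\rho,i\tau,\xi+i\eta) \leqslant -\beta/2$, so $\psi_i(i\rho,i\tau,\xi+i\eta) \notin \Omega$, i.e.\ $\psi_i \in \Psi(\Omega)$; Theorem~\ref{sdlemma} then yields the starlikeness of $f$.

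The only delicate point, and the step I expect to be the (minor) obstacle, is the observation needed for parts (iii) and (iv) that $\rho$ and $\tau$ must share a common sign, which is what converts $\rho\eta \geqslant 0$ into $\eta/\tau \geqslant 0$; it follows from $\rho\tau > 0$, itself a consequence of $\rho\tau \geqslant (1+3\rho^2)/2 > 0$. Everything else, in particular the estimate bounding $-\beta\rho\tau$ by $-\beta/2$, is identical to the argument used repeatedly throughout this section.
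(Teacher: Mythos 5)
Your proposal is correct and follows essentially the same route as the paper: the same set $\Omega=\{w:\operatorname{Re}w>-\beta/2\}$, the same four admissible functions $\psi_i$, and the same real-part computations $\alpha\eta/\rho-\beta\rho\tau$ and $\alpha\eta/\tau-\beta\rho\tau$ bounded by $-\beta(1+3\rho^2)/2\leqslant-\beta/2$. Your explicit justification that $\rho\tau>0$ forces $\rho$ and $\tau$ to share a sign, so that $\eta/\tau\geqslant 0$ as well as $\eta/\rho\geqslant 0$, is a detail the paper leaves implicit, and it is stated correctly.
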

\begin{proof}
	Let $\Omega$ be defined by $\Omega:=\{w\in \mathbb{C}: \operatorname{Re}w>-\beta/2\}$ and for $i=1,2,3,4$, let the functions $\psi_i:\mathbb{C}^3\to\mathbb{C}$ be defined  by 
	\begin{align*}
	\psi_1(u,v,w)=&\alpha(w/u)+u(1+\beta v), \quad 
	\psi_2(u,v,w)=\alpha (w/u)+v(1+\beta u),\\
	\psi_3(u,v,w)=&\alpha (w/v)+v(1+\beta u)\quad 
	\text{and}\quad 
	\psi_4(v,w)=\alpha (w/v)+u(1+\beta v).
	\end{align*}
	The hypothesis of the theorem shows that the function $\psi_i$ satisfies  
	\[\psi_i(Q_{ST},Q_{CV},Q_{SD})\in \Omega\quad \text{for\ } i=1,2,3,4.\] 
	It then follows from Theorem \ref{sdlemma}  that the function $f$ is starlike provided $\psi_i\in \Psi(\Omega)$.
	We complete the proof by showing  that the function $\psi_i\in \Psi(\Omega)$.
	
	Let $\alpha \geqslant0$ and $\beta \geqslant 0$ and  $\rho,\tau,\xi,\eta \in \mathbb{R}$ satisfy the conditions $\rho\tau\geqslant(1+3\rho^2)/2$ and $\rho\eta\geqslant0$. Then,  we have
	\begin{align*}
	\operatorname{Re}\psi_1(i\rho,i\tau,\xi+i\eta)=\frac{\alpha\eta}{\rho}-\beta\rho\tau\leqslant -\frac{\beta(1+3\rho^2)}{2}\leqslant-\frac{\beta}{2},
	\end{align*}
	and this proves that  the function $\psi_1\in \Psi(\Omega)$. For the function $\psi_2$, we have
	\begin{align*}
	\operatorname{Re}\psi_2(i\rho,i\tau,\xi+i\eta)=\frac{\alpha\eta}{\rho}-\beta\rho\tau\leqslant -\frac{\beta(1+3\rho^2)}{2}\leqslant-\frac{\beta}{2},
	\end{align*}and so  the function $\psi_2 \in \Psi(\Omega_2)$.
	Similarly, we have 
	\begin{align*}
	\operatorname{Re}\psi_3(i\rho,i\tau,\xi+i\eta)=\frac{\alpha\eta}{\tau}-\beta\rho\tau\leqslant -\frac{\beta(1+3\rho^2)}{2}\leqslant-\frac{\beta}{2},
	\shortintertext{and }
	\operatorname{Re}\psi_4(i\rho,i\tau, \xi+i\eta)=\frac{\alpha\eta}{\tau}-\beta\rho\tau \leqslant -\frac{\beta(1+3\rho^2)}{2}\leqslant-\frac{\beta}{2},
	\end{align*} so that  the functions $\psi_3\in \Psi(\Omega)$  and  $\psi_4   \in \Psi(\Omega)$.

\end{proof}
\begin{theorem}
	Let $\alpha \leqslant0$ and $\beta >0$. If the function $f \in \mathcal{A}$ satisfy any of the following inequalities
	\begin{enumerate}
		\item[(i)] $\operatorname{Re}\big(\alpha(Q_{SD}/Q_{ST})+\beta Q_{ST} Q_{CV}\big)>-\beta/2$,
		\item[(ii)] $\operatorname{Re}\big(\alpha(Q_{SD}/Q_{CV})+\beta Q_{ST} Q_{CV}\big)>-\beta/2$,	
		\item[(iii)] $\operatorname{Re}\big(\alpha(Q_{SD}/Q_{CV})+\beta Q_{ST} Q_{ST}\big)>0$,
		\item[(iv)] $\operatorname{Re}\big(\alpha(Q_{SD}/ Q_{ST})+\beta Q_{CV} Q_{CV}\big)>0$,
		\item[(v)] $\operatorname{Re}\big(\alpha(Q_{SD}/ Q_{ST})+\beta Q_{ST} Q_{ST}\big)>0$,
		\item[(vi)] $\operatorname{Re}\big(\alpha(Q_{SD}/ Q_{CV})+\beta Q_{CV} Q_{CV}\big)>0$,
		
	\end{enumerate}
	then the function $f$ is starlike.
\end{theorem}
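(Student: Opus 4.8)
The plan is to mirror the template used in the preceding theorems. For each of the six hypotheses I would produce a region $\Omega_i\subset\mathbb{C}$ and an admissible function $\psi_i:\mathbb{C}^3\to\mathbb{C}$ so that the stated inequality is exactly $\psi_i(Q_{ST},Q_{CV},Q_{SD})\in\Omega_i$, and then invoke Theorem~\ref{sdlemma}. Concretely, I would set $\Omega_1=\Omega_2=\{w\in\mathbb{C}:\operatorname{Re}w>-\beta/2\}$ and $\Omega_3=\Omega_4=\Omega_5=\Omega_6=\{w\in\mathbb{C}:\operatorname{Re}w>0\}$, and define
\begin{align*}
\psi_1(u,v,w)&=\alpha(w/u)+\beta uv, & \psi_2(u,v,w)&=\alpha(w/v)+\beta uv,\\
\psi_3(u,v,w)&=\alpha(w/v)+\beta u^2, & \psi_4(u,v,w)&=\alpha(w/u)+\beta v^2,\\
\psi_5(u,v,w)&=\alpha(w/u)+\beta u^2, & \psi_6(u,v,w)&=\alpha(w/v)+\beta v^2.
\end{align*}
Since $f(z)f'(z)/z\neq0$, the quotients $Q_{SD}/Q_{ST}$ and $Q_{SD}/Q_{CV}$ are well defined, so the six hypotheses say precisely that $\psi_i(Q_{ST},Q_{CV},Q_{SD})\in\Omega_i$ for $i=1,\dots,6$; Theorem~\ref{sdlemma} then reduces the theorem to checking $\psi_i\in\Psi(\Omega_i)$.

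For the admissibility check I would fix $\rho,\tau,\xi,\eta\in\mathbb{R}$ with $\rho\tau\geqslant(1+3\rho^2)/2$ and $\rho\eta\geqslant0$. The first constraint forces $\rho\tau\geqslant1/2>0$, hence $\rho\neq0$, $\tau\neq0$, and $\rho,\tau$ share the same sign; together with $\rho\eta\geqslant0$ this gives $\eta/\rho\geqslant0$ and $\eta/\tau=(\eta/\rho)(\rho/\tau)\geqslant0$. Using $(\xi+i\eta)/(i\rho)=\eta/\rho-i\xi/\rho$, $(\xi+i\eta)/(i\tau)=\eta/\tau-i\xi/\tau$, $(i\rho)(i\tau)=-\rho\tau$, $(i\rho)^2=-\rho^2$ and $(i\tau)^2=-\tau^2$, a direct computation yields
\begin{align*}
\operatorname{Re}\psi_1(i\rho,i\tau,\xi+i\eta)&=\frac{\alpha\eta}{\rho}-\beta\rho\tau, & \operatorname{Re}\psi_2(i\rho,i\tau,\xi+i\eta)&=\frac{\alpha\eta}{\tau}-\beta\rho\tau,\\
\operatorname{Re}\psi_3(i\rho,i\tau,\xi+i\eta)&=\frac{\alpha\eta}{\tau}-\beta\rho^2, & \operatorname{Re}\psi_4(i\rho,i\tau,\xi+i\eta)&=\frac{\alpha\eta}{\rho}-\beta\tau^2,\\
\operatorname{Re}\psi_5(i\rho,i\tau,\xi+i\eta)&=\frac{\alpha\eta}{\rho}-\beta\rho^2, & \operatorname{Re}\psi_6(i\rho,i\tau,\xi+i\eta)&=\frac{\alpha\eta}{\tau}-\beta\tau^2.
\end{align*}
Since $\alpha\leqslant0$ and $\eta/\rho,\eta/\tau\geqslant0$, each term $\alpha\eta/\rho$ or $\alpha\eta/\tau$ is $\leqslant0$; since $\beta>0$, the term $-\beta\rho\tau\leqslant-\beta(1+3\rho^2)/2\leqslant-\beta/2$ handles $\psi_1,\psi_2$, while $-\beta\rho^2\leqslant0$ and $-\beta\tau^2\leqslant0$ handle $\psi_3,\dots,\psi_6$. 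Hence $\operatorname{Re}\psi_1,\operatorname{Re}\psi_2\leqslant-\beta/2$ and $\operatorname{Re}\psi_3,\dots,\operatorname{Re}\psi_6\leqslant0$, so $\psi_i(i\rho,i\tau,\xi+i\eta)\notin\Omega_i$ for every $i$. Thus $\psi_i\in\Psi(\Omega_i)$ and, by Theorem~\ref{sdlemma}, $f$ is starlike.

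The one place I would be careful is the derivation of the sign facts $\eta/\rho\geqslant0$ and $\eta/\tau\geqslant0$: unlike the earlier theorems, where $Q_{SD}$ appears only as a summand so that $\rho\eta\geqslant0$ is used directly, here $Q_{SD}$ sits in the numerator of a quotient, and one must first observe that $\rho\tau\geqslant(1+3\rho^2)/2$ rules out $\rho=0$ and $\tau=0$ and pins down a common sign for $\rho$ and $\tau$. After that observation everything reduces to the routine substitution $u=i\rho$, $v=i\tau$, $w=\xi+i\eta$ used throughout the paper, and the split between the thresholds $-\beta/2$ in parts (i)--(ii) and $0$ in parts (iii)--(vi) is exactly what the statement records.
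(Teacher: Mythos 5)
Your proposal is correct and follows essentially the same route as the paper: identical choices of $\Omega_i$ and $\psi_i$, the same substitution $u=i\rho$, $v=i\tau$, $w=\xi+i\eta$, and the same bounds $-\beta\rho\tau\leqslant-\beta(1+3\rho^2)/2$ for parts (i)--(ii) and $-\beta\rho^2,-\beta\tau^2\leqslant0$ for parts (iii)--(vi). Your explicit justification that $\rho\tau\geqslant(1+3\rho^2)/2$ forces $\rho,\tau\neq0$ with a common sign, so that $\eta/\rho\geqslant0$ and $\eta/\tau\geqslant0$, is a detail the paper leaves implicit and is worth stating.
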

\begin{proof}
	For $i=1,2,\cdots 6$, let $\Omega_i$ be defined by $\Omega_1:=\{w\in \mathbb{C}: \operatorname{Re}w>-\beta/2\}=:\Omega_2$ and $\Omega_3:=\{w \in \mathbb{C}:\operatorname{Re}w>0\}=:\Omega_4:=\Omega_5:=\Omega_6$ and the functions $\psi_i:\mathbb{C}^3\to\mathbb{C}$ be defined  by 
	\begin{alignat*}{5}
	\psi_1(u,v,w)=&\alpha(w/u)+\beta uv,\  
	\psi_2(u,v,w)=\alpha (w/v)+\beta uv,\\
	\psi_3(u,v,w)=&\alpha (w/v)+\beta u^2,\  
	\psi_4(u,v,w)=\alpha (w/u)+\beta v^2,\\
	\psi_5(u,v,w)=&\alpha(w/u)+\beta u^2\quad 
	\text{and}\quad 
	\psi_6(v,w)=\alpha (w/v)+\beta v^2.
	\end{alignat*}
	The hypothesis of the theorem shows that the function $\psi_i$ satisfies  
	\[\psi_i(Q_{ST},Q_{CV},Q_{SD})\in \Omega_i\quad \text{for\ } i=1,2, \cdots 6.\] 
	It then follows from Theorem \ref{sdlemma}  that the function $f$ is starlike provided $\psi_i\in \Psi(\Omega_i)$.
	We complete the proof by showing  that the function $\psi_i\in \Psi(\Omega_i)$.
	
	Let $\alpha \leqslant0$ and $\beta >0$ and  $\rho,\tau,\xi,\eta \in \mathbb{R}$ satisfy the conditions $\rho\tau\geqslant(1+3\rho^2)/2$ and $\rho\eta\geqslant0$. Then,  we have
	\begin{align*}
	\operatorname{Re}\psi_1(i\rho,i\tau,\xi+i\eta)=\frac{\alpha\eta}{\rho}-\beta\rho\tau\leqslant -\frac{\beta(1+3\rho^2)}{2}\leqslant-\frac{\beta}{2} 
	\end{align*}
	and this proves that  the function $\psi_1\in \Psi(\Omega_1)$. For the function $\psi_2$, we have
	\begin{align*}
	\operatorname{Re}\psi_2(i\rho,i\tau,\xi+i\eta)=\frac{\alpha\eta}{\tau}-\beta\rho\tau\leqslant -\frac{\beta(1+3\rho^2)}{2}\leqslant-\frac{\beta}{2},
	\end{align*}and so  the function $\psi_2 \in \Psi(\Omega_2)$.
	Similarly, we have 
	\begin{align*}
	\operatorname{Re}\psi_3(i\rho,i\tau,\xi+i\eta)=\frac{\alpha\eta}{\tau}-\beta\rho^2\leqslant 0,
	\shortintertext{and }
	\operatorname{Re}\psi_4(i\rho,i\tau, \xi+i\eta)=\frac{\alpha\eta}{\rho}-\beta\tau^2 \leqslant 0,
	\end{align*} so that  the functions $\psi_3\in \Psi(\Omega_3)$  and  $\psi_4   \in \Psi(\Omega_4)$. Proceeding in a similar way, we have 
	\begin{align*}
	\operatorname{Re}\psi_5(i\rho,i\tau,\xi+i\eta)=\frac{\alpha\eta}{\rho}-\beta\rho^2 \leqslant 0
	\end{align*}
	and this proves that  the function $\psi_5\in \Psi(\Omega_5)$. For the function $\psi_6$, we have
	\begin{align*}
	\operatorname{Re}\psi_6(i\rho,i\tau,\xi+i\eta)=\frac{\alpha\eta}{\tau}-\beta\tau^2\leqslant 0,
	\end{align*}and so  the function $\psi_6 \in \Psi(\Omega_6)$.

\end{proof}
\begin{theorem}\label{sdtheorem15}
	Let $\alpha>0$ and $\beta \geqslant0$. If the function $f \in \mathcal{A}$ satisfy any of the following inequalities
	\begin{enumerate}
		\item[(i)] $\operatorname{Re}\big(\alpha(Q_{CV}/Q_{ST})-\beta Q_{SD} Q_{CV}\big)<3\alpha/2$,
		\item[(ii)] $\operatorname{Re}\big(\alpha(Q_{CV}/Q_{ST})-\beta Q_{SD} Q_{ST}\big)<3\alpha/2$,
		\item[(iii)] $\operatorname{Re}\big(\alpha(Q_{ST}/Q_{CV})+\beta Q_{SD} Q_{CV}\big)>2\alpha/3$ ,
		\item[(iv)] $\operatorname{Re}\big(\alpha(Q_{ST}/ Q_{CV})+\beta Q_{SD} Q_{ST}\big)>2\alpha/3$ ,
		
	\end{enumerate}
	then the function $f$ is starlike.
\end{theorem}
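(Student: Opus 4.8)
The plan is to reproduce the template used throughout this section: for each of the four parts, exhibit a set $\Omega_i\subset\mathbb{C}$ and a function $\psi_i\colon\mathbb{C}^3\to\mathbb{C}$ such that the stated inequality is exactly the assertion $\psi_i(Q_{ST},Q_{CV},Q_{SD})\in\Omega_i$, and then invoke Theorem~\ref{sdlemma}. Concretely, for parts~(i) and~(ii) I would set $\Omega_1=\Omega_2:=\{w\in\mathbb{C}:\operatorname{Re}w<3\alpha/2\}$ and take $\psi_1(u,v,w)=\alpha(v/u)-\beta vw$, $\psi_2(u,v,w)=\alpha(v/u)-\beta uw$; for parts~(iii) and~(iv) I would set $\Omega_3=\Omega_4:=\{w\in\mathbb{C}:\operatorname{Re}w>2\alpha/3\}$ and take $\psi_3(u,v,w)=\alpha(u/v)+\beta vw$, $\psi_4(u,v,w)=\alpha(u/v)+\beta uw$. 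With these choices the hypotheses are restated as $\psi_i(Q_{ST},Q_{CV},Q_{SD})\in\Omega_i$, so the whole argument collapses to checking $\psi_i\in\Psi(\Omega_i)$.

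The heart of the matter is therefore verifying the admissibility condition~\eqref{sdlemma1}, i.e.\ that $\psi_i(i\rho,i\tau,\xi+i\eta)\notin\Omega_i$ whenever $\rho\tau\geqslant(1+3\rho^2)/2$ and $\rho\eta\geqslant0$. The key preliminary remark, which I would state once and reuse, is that $\rho\tau\geqslant(1+3\rho^2)/2>0$ forces $\rho\neq0$ and $\operatorname{sgn}\tau=\operatorname{sgn}\rho$; together with $\rho\eta\geqslant0$ this gives $\tau\eta\geqslant0$ as well, so that in all four computations the ``Schwarzian'' terms $\pm\beta\tau\eta$ and $\pm\beta\rho\eta$ push the real part in the desired direction. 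Substituting the imaginary values then yields, for part~(i),
\[
\operatorname{Re}\psi_1(i\rho,i\tau,\xi+i\eta)=\frac{\alpha\tau}{\rho}+\beta\tau\eta\geqslant\frac{\alpha\tau}{\rho}=\frac{\alpha\,\rho\tau}{\rho^2}\geqslant\frac{\alpha(1+3\rho^2)}{2\rho^2}\geqslant\frac{3\alpha}{2},
\]
so $\psi_1(i\rho,i\tau,\xi+i\eta)\notin\Omega_1$; part~(ii) is word-for-word the same with $\beta\rho\eta$ replacing $\beta\tau\eta$. For parts~(iii) and~(iv) the analogous estimate is
\[
\operatorname{Re}\psi_3(i\rho,i\tau,\xi+i\eta)=\frac{\alpha\rho}{\tau}-\beta\tau\eta\leqslant\frac{\alpha\rho}{\tau}=\frac{\alpha\rho^2}{\rho\tau}\leqslant\frac{2\alpha\rho^2}{1+3\rho^2}\leqslant\frac{2\alpha}{3},
\]
using $\rho^2/(1+3\rho^2)<1/3$ and $\alpha>0$, and part~(iv) only replaces $\beta\tau\eta$ by $\beta\rho\eta$. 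In each case the value lands outside $\Omega_i$, so $\psi_i\in\Psi(\Omega_i)$ and Theorem~\ref{sdlemma} delivers the starlikeness of $f$.

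I do not anticipate a genuine obstacle here; the one point demanding care is the sign bookkeeping when $\rho<0$. There one must observe that $\rho\tau>0$ forces $\tau<0$, so $\tau/\rho$ and $\rho/\tau$ remain positive (being quotients of like-signed reals) and likewise $\tau\eta\geqslant0$ and $\rho\eta\geqslant0$; this is precisely what permits the chains of inequalities above to be written uniformly, without splitting into the cases $\rho>0$ and $\rho<0$. Everything else is the same routine substitution-and-estimate bookkeeping carried out in the preceding theorems.
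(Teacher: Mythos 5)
Your proposal is correct and follows essentially the same route as the paper: the same sets $\Omega_i$, the same admissible functions $\psi_i$, and the same substitution-and-estimate verification of the admissibility condition via $\rho\tau\geqslant(1+3\rho^2)/2$ and $\rho\eta\geqslant0$. Your explicit remark that $\rho\tau>0$ forces $\tau\eta\geqslant0$ (so the Schwarzian terms have the right sign) is a point the paper leaves implicit, but it is the same argument.
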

\begin{proof}
	For $i=1,2,3,4$, let $\Omega_i$ be defined by $\Omega_1:=\{w\in \mathbb{C}: \operatorname{Re}w<3\alpha/2\}=:\Omega_2$ and $\Omega_3:=\{w \in \mathbb{C}:\operatorname{Re}w>2\alpha/3\}=:\Omega_4$ and the functions $\psi_i:\mathbb{C}^3\to\mathbb{C}$ be defined  by 
	\begin{align*}
	\psi_1(u,v,w)&=\alpha(v/u)-\beta vw, \quad 
	\psi_2(u,v,w)=\alpha (v/u)-\beta uw,\\
	\psi_3(u,v,w)&=\alpha (u/v)+\beta vw\quad 
	\text{and}\quad 
	\psi_4(v,w)=\alpha (u/v)+\beta uw.
	\end{align*}
	The hypothesis of the theorem shows that the function $\psi_i$ satisfies  
	\[\psi_i(Q_{ST},Q_{CV},Q_{SD})\in \Omega_i\quad \text{for\ } i=1,2,3,4.\] 
	It then follows from Theorem \ref{sdlemma}  that the function $f$ is starlike provided $\psi_i\in \Psi(\Omega_i)$.
	We complete the proof by showing  that the function $\psi_i\in \Psi(\Omega_i)$.
	
	Let $\alpha>0$ and $\beta \geqslant0$ and  $\rho,\tau,\xi,\eta \in \mathbb{R}$ satisfy the conditions $\rho\tau\geqslant(1+3\rho^2)/2$ and $\rho\eta\geqslant0$. Then,  we have
	\begin{align*}
	\operatorname{Re}\psi_1(i\rho,i\tau,\xi+i\eta)=\frac{\alpha\tau}{\rho}+\beta\tau\eta\geqslant \frac{\alpha(1+3\rho^2)}{2\rho^2}\geqslant\frac{3\alpha}{2} 
	\end{align*}
	and this proves that  the function $\psi_1\in \Psi(\Omega_1)$. For the function $\psi_2$, we have
	\begin{align*}
	\operatorname{Re}\psi_2(i\rho,i\tau,\xi+i\eta)=\frac{\alpha\tau}{\rho}+\beta\rho\eta\geqslant \frac{\alpha(1+3\rho^2)}{2\rho^2}\geqslant\frac{3\alpha}{2},
	\end{align*}and so  the function $\psi_2 \in \Psi(\Omega_2)$.
	The real valued function $2\rho^2/(1+3\rho^2)$ is an increasing function and the maximum value of the function is $2/3$. Then, we have 
	\begin{align*}
	\operatorname{Re}\psi_3(i\rho,i\tau,\xi+i\eta)=\frac{\alpha\rho}{\tau}-\beta\tau\eta\leqslant \frac{2\alpha\rho^2}{1+3\rho^2}\leqslant\frac{2\alpha}{3},
	\shortintertext{and }
	\operatorname{Re}\psi_4(i\rho,i\tau, \xi+i\eta)=\frac{\alpha\rho}{\tau}-\beta\rho\eta \leqslant \frac{2\alpha\rho^2}{1+3\rho^2}\leqslant\frac{2\alpha}{3},
	\end{align*} so that  the functions $\psi_3\in \Psi(\Omega_3)$  and  $\psi_4   \in \Psi(\Omega_4)$.
\end{proof}
\begin{remark}
	Note that substitution of $\alpha=1$ and $\beta=0$ in Part(i) and Part(ii) of Theorem \ref{sdtheorem15} provide the same result as the one obtained in \cite[pp.523]{MR1761956}.
\end{remark}


\begin{thebibliography}{99}
\bibitem{MR2470182}R. M. Ali, V. Ravichandran\ and\ N. Seenivasagan, Subordination and superordination on Schwarzian derivatives, J. Inequal. Appl. {\bf 2008}, Art. ID 712328, 18 pp.

\bibitem{MR1356393}M. P. Ch'\^{e}n\ and\ S. Owa, Some criteria for certain classes of analytic functions, Math. Japon. {\bf 42} (1995), no.~2, 319--323.

\bibitem{cho} N. E. Cho, V. Kumar\ and\ V. Ravichandran,,Sharp bounds on the higher order Schwarzian derivatives for Janowski classes, Symmetry {\bf 10} (2018), no.~8, 348.


\bibitem{MR1078406}S. Fukui, A remark on a class of certain analytic functions, Proc. Japan Acad. Ser. A Math. Sci. {\bf 66} (1990), no.~7, 191--192.

\bibitem{MR2557114}S. Kanas, Norm of pre-Schwarzian derivative for the class of $k$-uniformly convex and $k$-starlike functions, Appl. Math. Comput. {\bf 215} (2009), no.~6, 2275--2282.


    
\bibitem{MR3608483}O. S. Kwon\ and\ Y. J. Sim, Starlikeness and Schwarzian derivatives of higher order of analytic functions, Commun. Korean Math. Soc. {\bf 32} (2017), no.~1, 93--106.

\bibitem{MR4017391} O. S. Kwon\ and\ Y. J. Sim, Sufficient conditions for Carath\'{e}odory functions and applications to univalent functions, Math. Slovaca {\bf 69} (2019), no.~5, 1065--1076.
    
\bibitem{MR0399438}Z. Lewandowski, S. Miller\ and\ E. Z\l otkiewicz, Generating functions for some classes of univalent functions, Proc. Amer. Math. Soc. {\bf 56} (1976), 111--117.

\bibitem{MR1894627}J.-L. Li\ and\ S. Owa, Sufficient conditions for starlikeness, Indian J. Pure Appl. Math. {\bf 33} (2002), no.~3, 313--318.


\bibitem{MR1639004} L. J. Lin\ and\ S. Owa, Properties of the Salagean operator, Georgian Math. J. {\bf 5} (1998), no.~4, 361--366.


\bibitem{MR0783572}S. S. Miller\ and\ P. T. Mocanu, On some classes of first-order differential subordinations, Michigan Math. J. {\bf 32} (1985), no.~2, 185--195.
	
\bibitem{MR1760285}S. S. Miller\ and\ P. T. Mocanu, {\it Differential subordinations}, Monographs and Textbooks in Pure and Applied Mathematics, 225, Marcel Dekker, Inc., New York, 2000.

\bibitem{MR0029999}Z. Nehari, The Schwarzian derivative and schlicht functions, Bull. Amer. Math. Soc. {\bf 55} (1949), 545--551.

\bibitem{MR0064145}Z. Nehari, Some criteria of univalence, Proc. Amer. Math. Soc. {\bf 5} (1954), 700--704.

\bibitem{MR1408788}M. Nunokawa, S. Owa, S. K. Lee, M. Obradovi\'{c}, M. K. Aouf, H. Saitoh, A. Ikeda\ and\  N. Koike,  Sufficient conditions for starlikeness, Chinese J. Math. {\bf 24} (1996), no.~3, 265--271.
	
\bibitem{MR1611761} M. Obradovi\'{c}, Simple sufficient conditions for univalence, Mat. Vesnik {\bf 49} (1997), no.~3-4, 241--244.
    
    
\bibitem{MR1071050}S. Owa\ and\ M. Obradovi\'{c}, An application of differential subordinations and some criteria for univalency, Bull. Austral. Math. Soc. {\bf 41} (1990), no.~3, 487--494.

\bibitem{MR1340763}C. Ramesha, S. Kumar\ and\ K. S. Padmanabhan, A sufficient condition for starlikeness, Chinese J. Math. {\bf 23} (1995), no.~2, 167--171.

\bibitem{MR1966516}V. Ravichandran, C. Selvaraj\ and\ R. Rajalaksmi, Sufficient conditions for starlike functions of order $\alpha$, JIPAM. J. Inequal. Pure Appl. Math. {\bf 3} (2002), no.~5, Article 81, 6 pp.

\bibitem{MR4308874}K. Sharma, N. E. Cho\ and\ V. Ravichandran, Sufficient conditions for strong starlikeness, Bull. Iranian Math. Soc. {\bf 47} (2021), no.~5, 1453--1475.


\bibitem{MR1684341}H. Silverman, Convex and starlike criteria, Int. J. Math. Math. Sci. {\bf 22} (1999), no.~1, 75--79.

\bibitem{MR2179557}S. Singh\ and\ S. Gupta, First order differential subordinations and starlikeness of analytic maps in the unit disc, Kyungpook Math. J. {\bf 45} (2005), no.~3, 395--404.

\bibitem{MR2097775}V. Singh\ and\ N. Tuneski, On criteria for starlikeness and convexity of analytic functions, Acta Math. Sci. Ser. B (Engl. Ed.) {\bf 24} (2004), no.~4, 597--602.

\bibitem{MR1752408} N. Tuneski, A note on some simple sufficient conditions for univalence, Fract. Calc. Appl. Anal. {\bf 2} (1999), no.~5, 721--728.

\bibitem{MR1761956}N. Tuneski, On certain sufficient conditions for starlikeness, Int. J. Math. Math. Sci. {\bf 23} (2000), no.~8, 521--527.
    

    
\bibitem{MR1826485} N. Tuneski, On some simple sufficient conditions for univalence, Math. Bohem. {\bf 126} (2001), no.~1, 229--236.
    
\bibitem{MR2514893}N. Tuneski, Some simple sufficient conditions for starlikeness and convexity, Appl. Math. Lett. {\bf 22} (2009), no.~5, 693--697.
    
\bibitem{MR2675019}N. Tuneski\ and\ H. Irmak, On some sufficient conditions for starlikeness, Sci. Magna {\bf 6} (2010), no.~1, 105--109.

\bibitem{MR2162214}N. Xu\ and\ D. Yang, Some criteria for starlikeness and strongly starlikeness, Bull. Korean Math. Soc. {\bf 42} (2005), no.~3, 579--590.
	
\end{thebibliography}
\end{document}